\newcommand{\myfig}[3][0]{
\begin{center}
  \vspace{0.2cm}
  \includegraphics[width=#3\hsize,angle=#1]{#2}

  \nobreak\medskip
\end{center}
}
\newcommand{\mycaption}[1]{
  \vspace{0.2cm}
  \begin{quote}
    {{\sc Figure} \arabic{figure}: #1}
  \end{quote}
  \vspace{0.2cm}
  \stepcounter{figure}
}
\theoremstyle{plain}
\newtheorem{theorem}{Theorem}
\newtheorem{lemma}[theorem]{Lemma}
\newtheorem{proposition}[theorem]{Proposition}
\numberwithin{equation}{section}
\numberwithin{theorem}{section}
\author[1]{Duokui Yan  \footnote{duokuiyan@buaa.edu.cn. The research of Duokui Yan is supported in part by NSFC (No. 11101221).}} \affil[1]{School of Mathematics and System Science, Beihang University, Beijing 100191, P. R. China}
\author[1]{Rongchang Liu \footnote{22201209001021000000@smss.buaa.edu.cn}}
\author[2]{Geng-zhe Chang \footnote{changgz@ustc.edu.cn}} \affil[2]{School of Mathematical Science, University of science and Technology of China, Hefei, Anhui 230026, P. R. China}
\title{A type of multiple integral with loggamma function}
\date{}
\begin{document}

\maketitle
\begin{abstract}
 In this paper, we study the multiple integral $ \displaystyle I= \int_0^1 \int_0^1 \dots \int_0^1 f(x_1+x_2 + \dots +x_n) \, dx_1 \, dx_2 \, \dots \, dx_n$. A general formula of $I$ is presented. As an application, the integral $I$ with $f(x)= \log \Gamma(x)$ is evaluated. We show that the values of $I$ share a common formula for all $n \in \mathbb{N}$. The subsidiary computational challenges are substantial and interesting in their own right. 
\end{abstract}

%

\section{Introduction}

A general idea, when faced with a multiple integral, is to lower its dimension. For example \cite{CH2}, a well-known n-dimensional integral
\begin{equation}\label{intoftriangle}
 \underset{ \substack{ x_1+x_2+ \dots +x_n \leq 1\\ x_1, x_2, \dots, x_n \geq 0 }  }{\idotsint}     f(x_1+ x_2 + \dots + x_n) dx_1 dx_2 \dots d x_n ,
 \end{equation}
can be simplified to an one-dimensional integral
\[ \frac{1}{(n-1)!} \int_0^1 t^{n-1} f(t) dt. \]

However, to the best of our knowledge, a similar integral
\begin{equation}\label{goal}
I=  \int_0^1 \int_0^1 \dots \int_0^1 f(x_1+x_2 + \dots +x_n) dx_1 \, dx_2 \, \dots \, dx_n
 \end{equation}
has no such formula. 

The goal of this paper is to find a formula for the above integral $I$ and apply it to a special case when $f(x)= \log \Gamma (x)$. The main results are as follows. A general formula of $I$ is obtained in Theorem \ref{main01}. A detailed proof can be found in Theorem \ref{maintheorem1} of Section \ref{general}.
\begin{theorem}\label{main01}
The integral $I$ satisfies the following formula: 
\begin{equation}
\begin{split}
I&= \int_0^1 \int_0^1 \dots \int_0^1 f(x_1+x_2 + \dots +x_n) dx_1 \, dx_2 \, \dots \, dx_n \\
 & =  \frac{1}{(n-1)!} \sum_{m=1}^{n} \int_0^1 G_m(t) f(t+m-1)  dt, 
\end{split}
\end{equation}
where 
\[G_m(t)= \sum_{i=1}^{m} (-1)^{i-1}(t+m-i)^{n-1}  {n \choose i-1}. \]
\end{theorem}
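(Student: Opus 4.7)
The strategy is to reduce the $n$-dimensional integral to a one-dimensional one by integrating along the level sets $s = x_1 + \cdots + x_n$. If $h_n(s)$ denotes the density of the sum of $n$ independent uniform $[0,1]$ random variables, then
\[ I = \int_0^n h_n(s) f(s) \, ds, \]
and the claim amounts to showing that on each subinterval $[m-1,m]$ the density equals $G_m(s-m+1)/(n-1)!$.

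To identify $h_n$, I would first compute the cumulative volume
\[ V_n(s) := \mathrm{Vol}\{x \in [0,1]^n : x_1 + \cdots + x_n \leq s\} \]
by inclusion--exclusion. Start from the corner simplex $T = \{x \in \mathbb{R}_{\geq 0}^n : \sum x_i \leq s\}$, whose volume is $s^n/n!$, and remove the pieces $B_i = \{x \in T : x_i > 1\}$. Translating each violating coordinate by $-1$ shows that every $k$-fold intersection $B_{i_1} \cap \cdots \cap B_{i_k}$ is congruent to a simplex of side $(s-k)_+$, so
\[ V_n(s) = \frac{1}{n!} \sum_{k=0}^{n} (-1)^k \binom{n}{k} (s-k)_+^n. \]
Differentiating and restricting to $s \in (m-1, m)$, where only the terms $k = 0, 1, \ldots, m-1$ survive, yields
\[ h_n(s) = \frac{1}{(n-1)!} \sum_{k=0}^{m-1} (-1)^k \binom{n}{k} (s-k)^{n-1}. \]

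The remainder is bookkeeping: split $\int_0^n h_n(s) f(s)\, ds$ into the pieces $\int_{m-1}^m$, substitute $t = s - (m-1)$, and re-index the inner sum via $i = k+1$. Then $(s-k)^{n-1} = (t+m-i)^{n-1}$ and $\binom{n}{k} = \binom{n}{i-1}$, recovering the polynomial $G_m(t)$ exactly. Summing over $m$ gives the claimed formula.

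The main obstacle is justifying the density computation rigorously, since $(s-k)_+^n$ is only piecewise smooth and differentiation must be handled with care at integer values of $s$. To sidestep this, an alternative I would keep in reserve is induction on $n$ via the convolution recurrence
\[ h_n(s) = \int_{\max(0, s-1)}^{\min(n-1, s)} h_{n-1}(v)\, dv, \]
matching the piecewise polynomial pieces by Pascal's rule $\binom{n-1}{i-1} + \binom{n-1}{i-2} = \binom{n}{i-1}$. Either route produces the same formula, and the final algebraic manipulation is identical.
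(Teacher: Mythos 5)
Your argument is correct, and it takes a genuinely different route from the paper. You identify $G_m(t)/(n-1)!$ with the Irwin--Hall density of $x_1+\cdots+x_n$ restricted to $[m-1,m]$, and obtain the closed form in one stroke by inclusion--exclusion over the face constraints $x_i\le 1$ of the cube: the cumulative volume is $\frac{1}{n!}\sum_{k=0}^n(-1)^k\binom{n}{k}(s-k)_+^n$, and differentiating and re-indexing via $i=k+1$, $t=s-(m-1)$ gives exactly $G_m$. The paper instead dissects the slab $K_m=\{m-1\le\sum x_i\le m\}\cap[0,1]^n$ by comparing it with the enlarged region $\{m-1\le\sum x_i\le m\}\cap[0,m]^n$, counts the $\binom{m+n-s-1}{n-1}$ translated copies of each lower slab $K_s$ that must be subtracted, and thereby derives the recursion $G_m(t)=(t+m-1)^{n-1}-\sum_{i=1}^{m-1}\binom{m+n-i-1}{n-1}G_i(t)$; solving this recursion in closed form then requires a separate induction resting on a Vandermonde-type identity (the vanishing of the coefficients of $(1+x)^{-n}(1+x)^n$). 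Your route bypasses that recursion and its combinatorial verification entirely, which is a real economy, and it explains conceptually why $G_m$ has the alternating-binomial form. Two small points to tighten: the differentiation worry is harmless because $(s-k)_+^n$ is $C^{n-1}$, so for $n\ge 2$ termwise differentiation is legitimate everywhere and for $n=1$ the formula is checked directly (alternatively, apply the inclusion--exclusion to the integral $\int_{K_m}f$ itself rather than to the volume, which avoids differentiation altogether and removes any need for probabilistic language); and you should state explicitly that the step from $I$ to $\int_0^n h_n(s)f(s)\,ds$ is just Fubini applied to the pushforward of Lebesgue measure under $x\mapsto\sum x_i$, i.e.\ $\int_{[0,1]^n}f(\textstyle\sum x_i)\,dx=\int_0^n f(s)\,dV_n(s)$.
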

If $f(x)= \log \Gamma(x)$, the evaluation of $I$ has a simple formula as in Theorem \ref{main02}. The main challenge is to choose appropriate combinatorial identities to simplify the value of $I$. A proof of it can be found in Theorem \ref{maintheorem2} of Section \ref{application}. 
\begin{theorem}\label{main02}
\begin{eqnarray}\label{loggammaformula}
I&=&I(n)= \int_0^1 \int_0^1 \dots \int_0^1 \log \Gamma(x_1+x_2 + \dots +x_n) dx_1 \, dx_2 \, \dots \, dx_n \nonumber \\
&=&\frac{1}{2} \log (2 \pi) - \frac{n-1}{2}H_n + \sum_{k=2}^{n-1} \frac{  (-1)^{n+k+1} k^n }{n!} {n-1 \choose k} \log k, \nonumber
\end{eqnarray}
where $\displaystyle H_n=\sum_{k=1}^{n} \frac{1}{k} =1+\frac{1}{2}+ \dots + \frac{1}{n}$.
\end{theorem}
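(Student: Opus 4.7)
The plan is to apply Theorem~\ref{main01} with $f(x)=\log\Gamma(x)$ and reduce every integral that appears to an elementary one via the functional equation $\Gamma(x+1)=x\Gamma(x)$. Iterating the latter yields, for $m\ge 1$, $\log\Gamma(t+m-1)=\log\Gamma(t)+\sum_{j=0}^{m-2}\log(t+j)$ (the empty sum is $0$ when $m=1$), so that
\[
(n-1)!\,I(n) \;=\; \sum_{m=1}^{n}\int_0^1 G_m(t)\log\Gamma(t)\,dt \;+\; \sum_{m=2}^{n}\sum_{j=0}^{m-2}\int_0^1 G_m(t)\log(t+j)\,dt.
\]
The first bracket will produce the $\tfrac{1}{2}\log(2\pi)$ in Theorem~\ref{main02}; the double sum will produce both $-\tfrac{n-1}{2}H_n$ and the $\log k$ sum.

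For the first bracket I evaluate $\sum_{m=1}^n G_m(t)$. Rewriting $G_m(t)=\sum_{\ell=0}^{m-1}(-1)^\ell\binom{n}{\ell}(t+m-1-\ell)^{n-1}$, exchanging the order of summation with $u=m-1-\ell$, and using the partial-sum identity $\sum_{r=0}^{N}(-1)^r\binom{n}{r}=(-1)^N\binom{n-1}{N}$, the sum collapses to $\sum_{u=0}^{n-1}(-1)^{n-1-u}\binom{n-1}{u}(t+u)^{n-1}$, which is the $(n-1)$st forward difference of $t^{n-1}$ and hence equals $(n-1)!$. Thus the first bracket reduces to $\int_0^1\log\Gamma(t)\,dt=\tfrac12\log(2\pi)$ by Raabe's formula, delivering the first summand of Theorem~\ref{main02}.

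For the double sum I interchange the two summations and, by the same identity, obtain
\[
\sum_{m=j+2}^{n} G_m(t) \;=\; (n-1)!-\sum_{u=0}^{j}(-1)^{j-u}\binom{n-1}{j-u}(t+u)^{n-1}.
\]
The constant piece contributes $\sum_{j=0}^{n-2}\int_0^1\log(t+j)\,dt=(n-1)\log(n-1)-(n-1)$, while each remaining integral $\int_0^1(t+u)^{n-1}\log(t+j)\,dt$ is evaluated by parts (differentiating the log, antidifferentiating the polynomial): after substituting $v=t+u$ and performing polynomial long division of $v^n/(v+j-u)$, it decomposes into a boundary $\log$-term plus rational contributions whose antiderivatives $v^{n-r}/(n-r)$ are the source of the harmonic numbers. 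The main obstacle --- flagged as ``substantial'' already in the introduction --- is then purely combinatorial: one must reindex and re-sum across $j,u,r$ so that (i) the $\log k$ coefficients collapse into $(-1)^{n+k+1}k^n\binom{n-1}{k}/n!$, and (ii) the rational pieces, whose $1/(n-r)$ factors assemble into $H_n$, collapse into precisely $-\tfrac{n-1}{2}H_n$. Both of these steps rest on the Stirling-type evaluations $\sum_{k=0}^{n}(-1)^{n-k}\binom{n}{k}k^r$, which vanish for $r<n$ and equal $n!$ for $r=n$, and finding the correct sequence of reindexings is where the bulk of the work lies.
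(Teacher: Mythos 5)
Your strategy is exactly the paper's: apply Theorem \ref{main01} with $f=\log\Gamma$, expand $\log\Gamma(t+m-1)=\log\Gamma(t)+\sum_{j=0}^{m-2}\log(t+j)$, use Raabe's formula for the $\tfrac12\log(2\pi)$ piece, and reduce the rest to the tail sums $\sum_{m\ge k}G_m(t)$. Your evaluation of these tail sums via $\sum_{r=0}^{N}(-1)^r\binom{n}{r}=(-1)^N\binom{n-1}{N}$ and the finite-difference identity $\Delta^{n-1}t^{n-1}=(n-1)!$ is precisely the paper's Lemma \ref{id1}, and your constant-piece computation $(n-1)\log(n-1)-(n-1)$ matches \eqref{formulaofI}. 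Up to that point the argument is correct and coincides with the paper step for step.

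The gap is that you stop exactly where the proof becomes hard. Your final paragraph asserts that after integration by parts and long division the $\log k$ coefficients ``collapse into'' $(-1)^{n+k+1}k^n\binom{n-1}{k}/n!$ and that the rational pieces ``collapse into precisely $-\tfrac{n-1}{2}H_n$,'' but you give no derivation of either claim; you yourself note that ``finding the correct sequence of reindexings is where the bulk of the work lies.'' In the paper this is the content of Lemmas \ref{id2} and \ref{id3} (the latter occupying the entire Appendix), and it requires more than the Stirling-type identities you cite: one also needs the degree-$(n+1)$ variant $\sum_{k=0}^{n}(-1)^k\binom{n}{k}(x-k)^{n+1}=(x-\tfrac n2)(n+1)!$ to produce the $\tfrac{n-1}{2}$ factor, the identity $\sum_{r=1}^{n}\frac{(-1)^{r+1}}{r}\binom{n}{r}=H_n$ (the harmonic numbers actually arise from the $1/r$ denominators paired with $\binom{n}{r}$, not directly from antiderivatives $v^{n-r}/(n-r)$), and the generating identity $\sum_{r=1}^{n}\frac{(-1)^{r+1}}{r}\binom{n}{r}\bigl[1-(1-x)^r\bigr]=\sum_{r=1}^{n}\frac{x^r}{r}$. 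Moreover, several delicate cancellations must be verified --- e.g.\ that the $(n-1)\log(n-1)$ from the constant piece combines with the boundary $\log(n-1)$ terms of $S_1$ to leave exactly the $k=n-1$ term $\frac{(n-1)^n}{n!}\log(n-1)$ of the final sum, and that the two sources of $H_n(n-1)^n/n$ cancel between $R_1$ and $R_2$. Without carrying out these computations the closed form is not established, so the proposal is a correct plan along the paper's route rather than a complete proof.
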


The paper is organized as follows. Sections \ref{simple2} and \ref{simple3} study the lower dimension cases. Ideas are explained clearly for $n=2$ and $3$. One can see from Figure 1 and Figure 2 about how we cut the square or the cube so that the integral $I$ over each subset becomes a simple one-dimensional integral. In Section \ref{general}, a formula of the integral $I$ is derived in Theorem \ref{maintheorem1}. As its application, Section \ref{application} evaluates the integral $I$ with $f(x)= \log \Gamma(x)$. 

\section{Simple case: n=2}\label{simple2}
We start from the simple case: $\displaystyle \int_0^1  \int_0^1  f(x+y) dx \, dy $. The integral domain is a unit square, and the integrand is $f(x+y)$. Let $t=x+y$.  The unit square can then be divided into two domains: $D_1$ and $D_2$ as in Figure 1, where
\[D_1= \{(x,y): \ 0 \leq x+y \leq 1, \ \  0 \leq x\leq 1, \ \  0 \leq y\leq 1 \}, \]
\[  D_2= \{(x,y):  \ 1 \leq x+y \leq 2,  \ \ 0 \leq x\leq 1, \ \  0 \leq y\leq 1 \}. \]
\begin{center}

\psset{xunit=1in,yunit=1in}
\begin{pspicture}(-1,-1)(2.6,1.4)
 \psline{->}(-0.5,0)(1.8,0)
 \psline{->}(0.5 ,-0.6)(0.5, 1.2)

   \rput(0.4, 0.5){$1$}

\psdots[dotsize=1.5pt 1](1, 0)
\psdots[dotsize=1.5pt 1](0.5, 0.5)
\psdots[dotsize=1.5pt 1](1, 0.5)

\psline[linewidth=1pt,linecolor=red,linestyle=dotted](0.5, 1)(1.5, 0)
\psline(0.5, 0.5)(1, 0.5)
\psline(1,0.5)(1, 0)
\psline(0.4, 0.6)(1.1, -0.1)
\rput(1, -0.1){$1$}
\rput(1.85  ,-0.05){$x$}
\rput(0.55, 1.25){$y$}
\rput(0.665,  0.165) {$D_1$}

\rput(1.165,  0.165) {$D_3$}
\rput(0.88, 0.38 ){$D_2$}
\rput(0.665, 0.665 ){$D_4$}

\rput(1.2,  0.55){$(1,1)$}

\rput(1.5, -0.1 ){$2$}

\rput(0.4, 1 ){$2$}
 \rput(0.5,-0.8){\textsc{Figure} 1: Domains $D_1$, $D_2$, $D_3$ and $D_4$.}

\end{pspicture}
\end{center}
The following lemma shows that $\int_0^1  \int_0^1  f(x+y) dx \, dy $ is the sum of two one-dimensional integrals.
\begin{lemma}\label{2dcase}
\begin{equation}\label{lemma1}
\begin{split}
\int_0^1  \int_0^1  f(x+y) dx \, dy  =& \iint_{D_1 } f(x+y)  dx  dy + \iint_{D_2 } f(x+y)  dx  dy \\
=& \int_0^1 t f(t) dt   +\int_0^1 (1-t) f(t+1) dt.
\end{split}
\end{equation}
\end{lemma}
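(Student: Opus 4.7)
The plan is to split the unit square into the two triangular pieces $D_1$ and $D_2$ exactly as depicted in Figure 1, compute each piece separately via a change of variables, and then recombine. Since $f(x+y)$ is constant along lines $x+y = t$, the natural move is to parametrize each region by $t$ and integrate out the other variable using a slicing argument. This reduces each double integral to a one-dimensional integral.

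For the piece over $D_1$, I would fix $t \in [0,1]$ and observe that the slice $\{x+y = t\} \cap D_1$ is the segment $\{(x, t-x) : 0 \le x \le t\}$, whose projection onto the $x$-axis has length $t$. Using the change of variables $t = x+y$ with $x$ retained (Jacobian $1$), the inner integration in $x$ collapses to a factor of $t$, yielding $\int_0^1 t\, f(t)\, dt$. For the piece over $D_2$, I would similarly fix $t \in [1,2]$; the slice is $\{(x, t-x): t-1 \le x \le 1\}$, of length $2-t$. This gives $\int_1^2 (2-t) f(t)\, dt$, and a shift $t \mapsto t+1$ converts this into $\int_0^1 (1-t) f(t+1)\, dt$.

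Summing the two pieces produces exactly the right-hand side of \eqref{lemma1}. There is no real obstacle here; the only subtle point is verifying that the slice lengths are correctly described (in particular that the constraints $0 \le y \le 1$ are respected in each region), which amounts to checking endpoints when $t$ crosses the value $1$. Because $f$ is only assumed to be integrable, I would, if needed, justify the slicing rigorously by Fubini's theorem applied to $f(x+y) \mathbf{1}_{D_i}(x,y)$, but no deeper machinery is required.
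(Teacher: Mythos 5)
Your proposal is correct and follows essentially the same route as the paper: the same decomposition of the square into $D_1$ and $D_2$, and the same reduction of each piece to a one-dimensional integral over the level sets of $t=x+y$. The only (harmless) difference is in $D_2$: you slice it directly to get $\int_1^2(2-t)f(t)\,dt$ and then shift, whereas the paper first maps $D_2$ onto $D_1$ via the reflection $(x,y)\mapsto(1-x,1-y)$ and reuses the $D_1$ computation --- a trick it later exploits for the top region in higher dimensions.
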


\begin{proof}
For integral over domain $D_1$, note that $t=x+y$. Introduce the transformation $(x,y) \mapsto (x, t)$. It is clear that the Jacobian is 1. Then
\begin{equation}\label{2dxy}
\iint_{D_1 } f(x+y)  dx  dy= \int_0^1 \int_0^t  f(t) dx  dt  = \int_0^1 t f(t) dt.
\end{equation}
For integral over domain $D_2$, let $\, x_1=1-x \, $ and $\, y_1=1-y$. Then the new variable $(x_1, y_1)$ is in $D_1$ and
 \begin{equation}\label{2dx1y1}
 \begin{split}
& \iint_{D_2 } f(x+y)  dx  dy= \iint_{D_1 } f(2-x_1-y_1)  dx_1  dy_1 \\
= & \int_0^1 t f(2-t) dt.
\end{split}
\end{equation}
If one set $u=1-t$, it follows that $\displaystyle \int_0^1 t f(2-t) dt= \int_0^1 (1-u) f(u+1) du$. Then
\begin{equation}\label{D2int}
\iint_{D_2 } f(x+y)  dx  dy= \int_0^1 (1-u) f(u+1) du.
\end{equation}
Identities \eqref{2dxy} and \eqref{D2int} then imply \eqref{lemma1}. The proof is complete.

\end{proof}


\section{The case: $n=3$}\label{simple3}
For $n=3$, the main idea is to cut the unit box into several simplexes so that we can apply the integral formula over each simplex. In this section, we explain the case $n=3$ clearly and visualize the division of the unit cube in Figure 2. Let $E= \{(x, y,z): \ 0 \leq x\leq 1, \ \  0\leq y \leq 1, \ \  0 \leq z\leq 1\}$ be the unit cube. Set
\[E_1= \{ (x, y,z): \  0\leq x+y+z\leq 1, \ \  0\leq x\leq 1, \ \ 0\leq y \leq 1, \ \  0\leq z\leq 1 \}, \]
\[E_2= \{ (x, y,z): \  1\leq x+y+z\leq 2, \ \  0\leq x\leq 1, \ \ 0\leq y \leq 1, \ \  0\leq z\leq 1 \}, \]
and
\[E_3= \{ (x, y,z): \ 2 \leq x+y+z\leq 3, \ \  0\leq x\leq 1, \ \ 0\leq y \leq 1, \ \  0\leq z\leq 1 \}. \]
Then $E= E_1 \cup E_2 \cup E_3$ and the integral can be split into three parts:
\[\int_0^1  \int_0^1 \int_0^1  f(x+y+z) dx \, dy \, dz  \]
\[=  \int_{E_1}  f(x+y+z) dx \, dy \, dz +  \int_{E_2}  f(x+y+z) dx \, dy \, dz + \int_{E_3}  f(x+y+z) dx \, dy \, dz . \]
By formula for integral \eqref{intoftriangle}, it follows that $ \displaystyle \int_{E_1}  f(x+y+z) dx \, dy \, dz  =  \frac{1}{2} \int_0^1  t^2 f(t) dt $. The difficult parts are the integrals over regions $E_2$ and $E_3$. The following lemma explains how to simplify these two integrals to one-dimensional integrals.
\begin{lemma}\label{xyz}
\begin{equation}\label{lemma2}
\begin{split}
& \int_0^1  \int_0^1 \int_0^1  f(x+y+z) dx \, dy \, dz  \\
=& \frac{1}{2}  \int_0^1 t^2 f(t) dt +  \frac{1}{2}  \int_0^1 (-2 t^2 +2 t +1)f(t+1) dt +  \frac{1}{2}  \int_0^1 (1-t)^2 f(t+2) dt.
\end{split}
\end{equation}
\end{lemma}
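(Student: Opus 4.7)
The plan is to use the decomposition $E = E_1 \cup E_2 \cup E_3$ already set up in the excerpt and evaluate each piece separately. The integral over $E_1$ is immediate: since $E_1$ is precisely the standard simplex $\{x,y,z \ge 0,\ x+y+z \le 1\}$, the one-dimensional reduction \eqref{intoftriangle} gives $\int_{E_1} f(x+y+z)\,dx\,dy\,dz = \frac{1}{2}\int_0^1 t^2 f(t)\,dt$, which accounts for the first term on the right-hand side.

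For $E_3$ I would mimic the trick used in Lemma \ref{2dcase}: set $x_1=1-x$, $y_1=1-y$, $z_1=1-z$. The Jacobian is $1$, the region $E_3$ maps bijectively onto $E_1$, and the argument becomes $3-x_1-y_1-z_1$. Applying \eqref{intoftriangle} again yields $\frac{1}{2}\int_0^1 t^2 f(3-t)\,dt$, and finally the substitution $u = 1-t$ converts this into $\frac{1}{2}\int_0^1 (1-u)^2 f(u+2)\,du$, matching the third term in the claim.

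The genuine obstacle is $E_2$. My plan is to introduce the change of variables $(x,y,z)\mapsto (x,y,t)$ with $t=x+y+z$, whose Jacobian is $1$, and write
\[
\int_{E_2} f(x+y+z)\,dx\,dy\,dz = \int_1^2 A(t)\, f(t)\,dt,
\]
where $A(t)$ is the two-dimensional area of the slice $\{(x,y) : 0\le x,y\le 1,\ t-1\le x+y\le t\}$ of the unit square. For $t\in[1,2]$ this slice is the unit square with two opposite corners cut off, so
\[
A(t) = 1 - \tfrac{1}{2}(2-t)^2 - \tfrac{1}{2}(t-1)^2.
\]
Expanding gives $A(t) = -t^2 + 3t - \tfrac{3}{2}$, and the shift $t=s+1$ converts $A(t)\,f(t)\,dt$ into $\tfrac{1}{2}(-2s^2+2s+1)\,f(s+1)\,ds$, which is exactly the middle term. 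Summing the three contributions gives \eqref{lemma2}.

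The main difficulty I anticipate is the geometric description of $E_2$: unlike $E_1$ and $E_3$, it is not a simplex, so the slice $A(t)$ must be described by subtracting two corner simplices from the unit square. Once this subtraction is justified (carefully checking the two inequality cuts for $t\in[1,2]$), the remaining work is routine algebra. An alternative (and good sanity check) would be to compute $\int_{E_2}$ as $\int_E - \int_{E_1} - \int_{E_3}$ only when $f$ is a monomial, but the direct slicing approach above gives the result for arbitrary integrable $f$ in one shot.
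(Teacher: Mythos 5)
Your proposal is correct, and the treatments of $E_1$ and $E_3$ coincide exactly with the paper's (reduction of $E_1$ via \eqref{intoftriangle}, and the reflection $x_1=1-x$, $y_1=1-y$, $z_1=1-z$ followed by $u=1-t$ for $E_3$). Where you genuinely diverge is on $E_2$: the paper embeds $E_2$ in the larger slab $E_{20}=\{1\le x+y+z\le 2,\ 0\le x,y,z\le 2\}$, evaluates $\int_{E_{20}}$ as a difference of two simplices to get $\frac{1}{2}\int_0^1(t+1)^2f(t+1)\,dt$, and then subtracts three congruent shifted copies $E_{21},E_{22},E_{23}$ of a unit-cell simplex, each contributing $\frac{1}{2}\int_0^1 t^2 f(t+1)\,dt$. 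You instead slice directly: with $(x,y,z)\mapsto(x,y,t)$ you write $\int_{E_2}f=\int_1^2 A(t)f(t)\,dt$ where $A(t)=1-\frac{1}{2}(2-t)^2-\frac{1}{2}(t-1)^2=-t^2+3t-\frac{3}{2}$ is the area of the cross-section of the unit square cut by $t-1\le x+y\le t$, and the shift $t=s+1$ gives the middle term. Both computations are valid; I checked your area formula and the two excised corner triangles are indeed disjoint and contained in the square for $t\in[1,2]$, so the subtraction is justified. Your slicing argument is shorter and more self-contained for $n=3$, but the paper's seemingly roundabout inclusion--exclusion over shifted unit cells is chosen deliberately: it is exactly the mechanism that scales to general $n$ and general $m$ in Theorem \ref{recursive}, where the count of congruent copies becomes the binomial coefficient $a_s=\binom{m+n-s-1}{n-1}$. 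A slicing proof in dimension $n$ would require the cross-sectional volume of $\{m-1\le\sum x_i\le m\}\cap[0,1]^{n-1}$, i.e.\ essentially the Irwin--Hall density, which needs its own inclusion--exclusion; so the two approaches converge at the general level, and yours is simply the more economical route for the special case at hand.
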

\begin{proof}
We introduce the transformation $(x, y,z) \mapsto (x, y, t)$. By formula for integral \eqref{intoftriangle},
\begin{eqnarray}\label{3dxyz1}
  \int_{E_1}  f(x+y+z) dx \, dy \, dz  =  \frac{1}{2} \int_0^1  t^2 f(t) dt.
 \end{eqnarray}
Note that  integral \eqref{3dxyz1} can be applied to calculate the integral over domain $E_3$. Let $x_1=1-x$, $y_1=1-y$ and $z_1=1-z$. The integral over domain $E_3$
 becomes
 \begin{eqnarray}\label{3dxyz2}
 & & \int_{E_3}  f(x+y+z) dx \, dy \, dz  \nonumber \\
 &=& \int_{E_1}  f(3-x_1-y_1-z_1) dx_1 \, dy_1 \, dz_1  \\
 &=&  \frac{1}{2}  \int_0^1 t^2 f(3-t) dt . \nonumber
 \end{eqnarray}
If one sets $u=1-t$, it implies that $\displaystyle \frac{1}{2} \int_0^1  t^2 f(3-t) dt = \frac{1}{2}  \int_0^1  (1-u)^2 f(2+u) du $.  Hence,
\begin{equation}\label{3DE3int}
 \int_{E_3}  f(x+y+z) dx \, dy \, dz = \frac{1}{2}  \int_0^1  (1-t)^2 f(t+2) dt.
\end{equation}
By equations \eqref{3dxyz1} and \eqref{3DE3int}, Lemma \ref{xyz} is reduced to show
\begin{equation}\label{E2}
  \int_{E_2}  f(x+y+z) dx \, dy \, dz = \frac{1}{2}  \int_0^1 (-2 t^2 +2 t +1)f(t+1) dt .
  \end{equation}
Consider the domain \[E_{20}= \{ (x, y,z): \  1 \leq x+y+z\leq 2, \ \ 0 \leq x\leq 2, \ \ 0 \leq y \leq 2, \ \  0 \leq z\leq 2 \}.\]  Similar to Figure 1, we can divide $E_{20}$ into several domains related to $E_1$ and $E_2$. For each variable $x$, $y$, or $z$, it belongs to either $[0, 1]$ or $[1, 2]$. For example, if $x \in [1, 2]$, since $1 \leq x+y+z \leq 2$, it implies that  $y, z \in [0, 1]$. Then the domain $E_{20}$ can be divided into 4 different subdomains: $E_{21}$, $E_{22}$, $E_{23}$ and $E_2$. A picture of this partition can be seen in Figure 2.
\[E_2= \{ (x, y,z): \  1\leq x+y+z\leq 2, \ \  0\leq x\leq 1, \ \ 0\leq y \leq 1, \ \  0 \leq z\leq 1 \},\]
\[ E_{21}=  \{ (x, y,z): \  1\leq x+y+z\leq 2, \ \ 1\leq x\leq 2, \ \ 0\leq y \leq 1, \ \  0\leq z\leq 1 \},\]
\[ E_{22}=  \{ (x, y,z): \  1 \leq x+y+z\leq 2, \ \ 0 \leq x\leq 1, \ \ 1\leq y \leq 2, \ \  0 \leq z\leq 1 \},\]
and
\[ E_{23}=  \{ (x, y,z): \  1\leq x+y+z\leq 2, \ \ 0 \leq x\leq 1, \ \  0 \leq y \leq 1, \ \  1\leq  z\leq 2 \}.\]
\begin{center}
\myfig{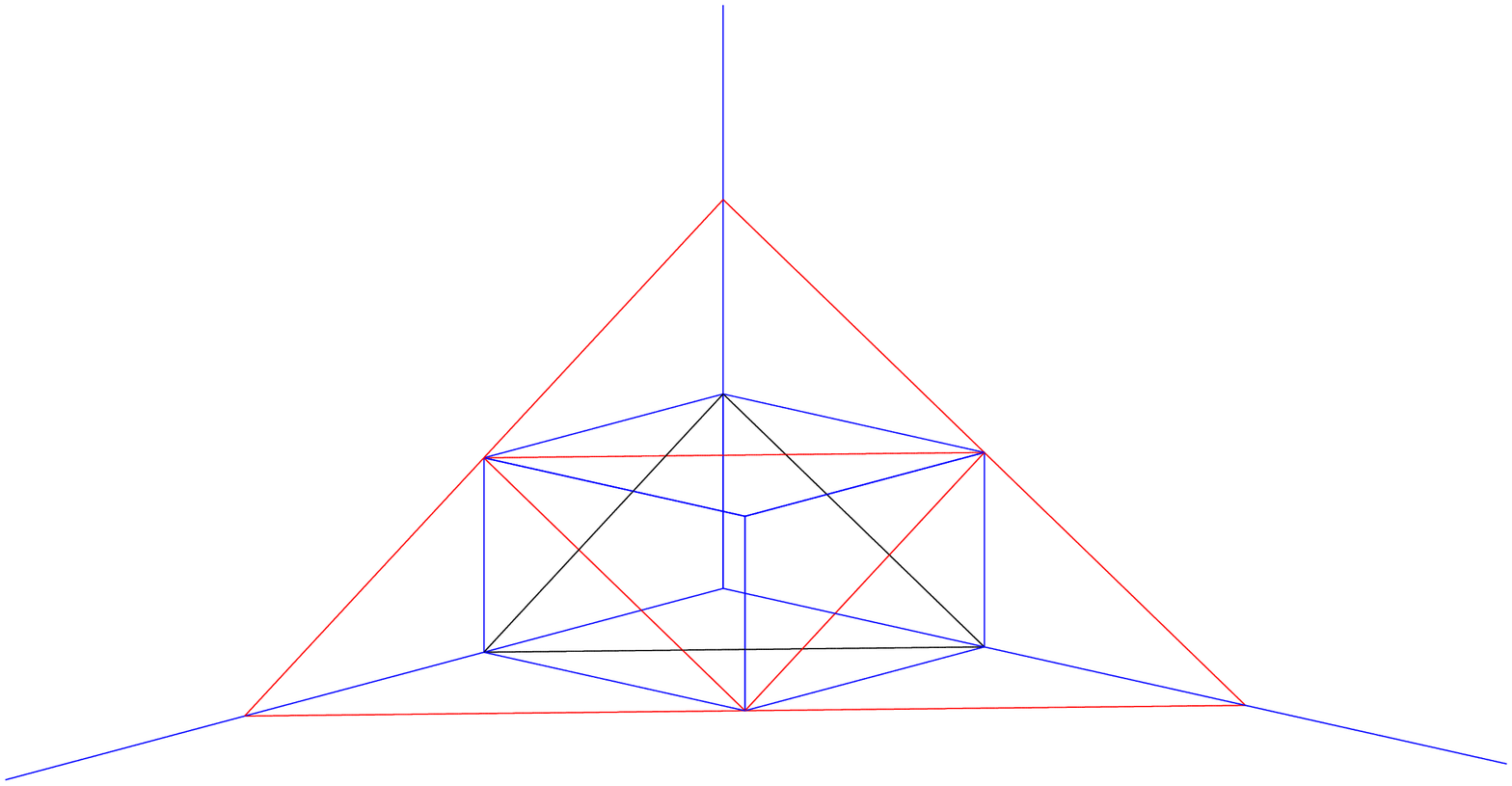}{0.8}
\mycaption{Region $E_{20}$ and its partition: $E_2$, $E_{21}$, $E_{22}$, $E_{23}$.}
\end{center}
Again by formula for integral \eqref{intoftriangle}, the integral over domain $E_{20}$ is
\begin{eqnarray}\label{E20int}
& &\int_{E_{20}}  f(x+y+z) dx \, dy \, dz  \nonumber\\
&=& \int_1^2 \frac{1}{2} t^2 f(t) dt = \frac{1}{2}  \int_0^1  (t+1)^2 f(t+1) dt.
\end{eqnarray}
And it also satisfies
 \begin{eqnarray}\label{E20}
 & & \int_{E_{20}}  f(x+y+z) dx \, dy \, dz  \nonumber \\
 &=&  \int_{E_{21}}  f(x+y+z) dx \, dy \, dz  +   \int_{E_{22}}  f(x+y+z) dx \, dy \, dz  \\
 &  & + \int_{E_{23}}  f(x+y+z) dx \, dy \, dz  +  \int_{E_{2}}  f(x+y+z) dx \, dy \, dz. \nonumber
 \end{eqnarray}
By definitions of $E_{21}$, $E_{22}$ and $E_{23}$, it is clear that $ \displaystyle \int_{E_{21}}  f(x+y+z) dx \, dy \, dz= \int_{E_{22}}  f(x+y+z) dx \, dy \, dz =\int_{E_{23}}  f(x+y+z) dx \, dy \, dz$. So we only need to consider   $ \displaystyle \int_{E_{21}}  f(x+y+z) dx \, dy \, dz$.
Let $\tilde{x} = x-1$, then by integral\eqref{3dxyz1},
\begin{eqnarray}\label{E21}
 & & \int_{E_{21}}  f(x+y+z) dx \, dy \, dz  \nonumber \\
 &=&  \int_{E_{1}}  f(\tilde{x}+y+z+1) d \tilde{x} \, dy \, dz  \\
 &=&    \frac{1}{2}  \int_0^1  t^2 f(t+1) dt. \nonumber
 \end{eqnarray}
Therefore, \eqref{E20int}, \eqref{E20} and \eqref{E21} imply that
\begin{eqnarray*}
& & \int_{E_{2}}  f(x+y+z) dx \, dy \, dz \nonumber \\
&=& \int_{E_{20}}  f(x+y+z) dx \, dy \, dz - 3 \int_{E_{21}}  f(x+y+z) dx \, dy \, dz \nonumber \\
&=& \frac{1}{2}  \int_0^1  (t+1)^2 f(t+1) dt-  \frac{3}{2}  \int_0^1  t^2 f(t+1) dt \\
&=& \frac{1}{2}  \int_0^1 (-2 t^2 +2 t +1)f(t+1) dt.
\end{eqnarray*}
The proof is complete.
\end{proof}

\section{General case}\label{general}
In this section, we study the general n-dimensional case
\[ I=\int_0^1 \int_0^1 \dots \int_0^1 f(x_1+x_2 + \dots +x_n) dx_1 \, dx_2 \, \dots \, dx_n.\]
A general formula of $I$ is calculated in Theorem \ref{maintheorem1}. The rest of this section is to proof it. A recursive formula is derived in  Theorem \ref{recursive} and the proof of Theorem \ref{maintheorem1} follows by Theorem \ref{formulaofim}.
\begin{theorem}\label{maintheorem1}
The integral $I$ satisfies the following formula: 
\begin{equation}
\begin{split}
I&= \int_0^1 \int_0^1 \dots \int_0^1 f(x_1+x_2 + \dots +x_n) dx_1 \, dx_2 \, \dots \, dx_n \\
 & =  \frac{1}{(n-1)!} \sum_{m=1}^{n} \int_0^1 G_m(t) f(t+m-1)  dt, 
\end{split}
\end{equation}
where 
\[G_m(t)= \sum_{i=1}^{m} (-1)^{i-1}(t+m-i)^{n-1}  {n \choose i-1}. \]
\end{theorem}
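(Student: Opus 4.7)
The plan is to extend the cut-and-inclusion-exclusion strategy already used for $n = 2$ and $n = 3$ to the general setting. First I would partition the cube by level sets of the sum: for $m = 1, \ldots, n$, set
\[E_m = \{(x_1, \ldots, x_n) \in [0,1]^n : m - 1 \leq x_1 + \cdots + x_n \leq m\},\]
so that $[0,1]^n = E_1 \cup \cdots \cup E_n$ (overlaps have measure zero) and $I = \sum_{m=1}^n I_m$, where $I_m := \int_{E_m} f(x_1 + \cdots + x_n)\, d\mathbf{x}$. The theorem is then equivalent to proving, for each fixed $m$,
\[I_m = \frac{1}{(n-1)!}\int_0^1 G_m(t)\, f(t + m - 1)\,dt.\]

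To evaluate $I_m$, I would drop the upper box constraints and enlarge to the simplex shell
\[\tilde E_m = \{(x_1, \ldots, x_n) : x_i \geq 0,\ m - 1 \leq \textstyle\sum_i x_i \leq m\},\]
whose integral is $\frac{1}{(n-1)!}\int_{m-1}^m t^{n-1} f(t)\,dt$ by the simplex formula (1.1). Since $E_m = \tilde E_m \setminus \bigcup_i \{x_i > 1\}$, inclusion-exclusion expresses $I_m$ as an alternating sum over subsets $S \subseteq \{1, \ldots, n\}$ of $\int_{\tilde E_m \cap \bigcap_{i \in S}\{x_i > 1\}} f\,d\mathbf{x}$. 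For $|S| = k$, the translation $y_i = x_i - 1$ for $i \in S$ (and $y_i = x_i$ otherwise) turns this region into the pure simplex shell $\{y_i \geq 0,\ m - 1 - k \leq \sum_i y_i \leq m - k\}$, which is empty or of measure zero when $k \geq m$ and otherwise integrates to $\frac{1}{(n-1)!}\int_{m-1-k}^{m-k} t^{n-1} f(t + k)\,dt$ by a second application of (1.1). Grouping by $k$ via symmetry yields
\[I_m = \frac{1}{(n-1)!}\sum_{k=0}^{m-1}(-1)^k\binom{n}{k}\int_{m-1-k}^{m-k} t^{n-1} f(t + k)\,dt.\]

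The last step is algebraic: I would align every integral on $[0,1]$ with the common argument $f(s + m - 1)$ by substituting $s = t - (m - 1 - k)$ in the $k$-th piece, which gives $t + k = s + m - 1$ and $t^{n-1} = (s + m - 1 - k)^{n-1}$. Reindexing with $i = k + 1$ turns the bracketed polynomial into exactly
\[\sum_{i=1}^m (-1)^{i-1}\binom{n}{i-1}(s + m - i)^{n-1} = G_m(s),\]
completing the derivation.

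The main obstacle is the geometric bookkeeping in the inclusion-exclusion. The crucial observation is that $\tilde E_m$ carries no upper-bound constraints on individual $x_i$ (only $x_i \geq 0$ and the sum constraint), so that after translating on any index set $S$ the remaining coordinates outside $S$ are likewise free of box cutoffs and the region collapses to a clean simplex shell on which (1.1) applies directly; any stray $x_i \leq 1$ constraint would force an internal recursion and spoil the one-line evaluation. The secondary checks — that the $k \geq m$ terms vanish and that the alternating polynomial sum genuinely matches $G_m$ as defined — are routine. This route produces the explicit formula for $I_m$ that the paper reaches via the recursive route of Theorem \ref{recursive}.
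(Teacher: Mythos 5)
Your proposal is correct, and it reaches the formula by a genuinely different route from the paper. The paper also begins with the simplex shell (its region $K_{m0}$ coincides with your $\tilde E_m$, since the constraints $x_i\le m$ are vacuous there), but it then subdivides $K_{m0}$ into the translated unit boxes $K_{i_1\cdots i_n}$, obtains a recursion $I_m=\frac{1}{(n-1)!}\int_0^1(t+m-1)^{n-1}f(t+m-1)\,dt-\sum_{s=1}^{m-1}\binom{m+n-s-1}{n-1}J_{s,m}$ (Theorem \ref{recursive}), and only afterwards extracts the closed form of $G_m$ by an induction on $m$ that requires several binomial-coefficient identities (Theorem \ref{formulaofim}). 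You instead run inclusion--exclusion directly over the half-spaces $\{x_i>1\}$, so that each term is itself a translated simplex shell with no residual box constraints; the coefficient $\binom{n}{k}$ then appears immediately as the number of $k$-element index sets, and the substitution $s=t-(m-1-k)$ produces $G_m$ in one step with no recursion and no induction. The two computations are consistent --- your one-step argument effectively solves the paper's recursion in closed form --- but yours is shorter and makes the origin of the alternating binomial coefficients in $G_m$ transparent, at the modest cost of the set-theoretic bookkeeping in the identity $\mathbf{1}_{E_m}=\sum_{S\subseteq\{1,\dots,n\}}(-1)^{|S|}\mathbf{1}_{\tilde E_m}\prod_{i\in S}\mathbf{1}_{\{x_i>1\}}$ and the (correct) observation that the terms with $|S|\ge m$ contribute nothing. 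If you write this up, do spell out that the generalized simplex formula $\int_{\{x\ge 0,\ a\le\sum x_i\le b\}}f(\sum x_i)\,d\mathbf{x}=\frac{1}{(n-1)!}\int_a^b t^{n-1}f(t)\,dt$ follows from \eqref{intoftriangle}, since the paper only states the case $a=0$, $b=1$.
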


The idea is to divide the n-dimensional unit box into n different polyhedrons and the integral $I$ over each polyhedron can be simplified to one-dimensional integrals by applying the tricks in the 2D or 3D cases. The  $n$ different polyhedrons are defined as follows
\begin{equation*}
\begin{split}
K_1  & = \{(x_1, x_2,\dots, x_n):  \\
 & \  0 \leq x_1+x_2+ \dots + x_n \leq  1, \ \  0 \leq x_1 \leq 1,\ \  0 \leq x_2 \leq 1,  \ \ \dots, \ \   0 \leq x_n \leq  1 \},
 \end{split}
\end{equation*}
\begin{equation*}
\begin{split}
K_2  & = \{(x_1, x_2,\dots, x_n):  \\
 & \  1 \leq x_1+x_2+ \dots + x_n \leq  2, \ \  0 \leq x_1 \leq 1 ,\ \  0 \leq x_2 \leq 1,  \ \ \dots, \ \   0 \leq x_n \leq 1 \},
 \end{split}
 \end{equation*}
      \[     \dots \dots  \]
 \begin{equation*}
\begin{split}
K_n & = \{(x_1, x_2,\dots, x_n):  \\
 & \  n-1 \leq x_1+x_2+ \dots + x_n \leq  n, \ \  0 \leq x_1 \leq 1,\ \  0 \leq x_2 \leq 1,  \ \ \dots, \ \   0 \leq x_n \leq 1 \}.
 \end{split}
 \end{equation*}
By formula for integral \eqref{intoftriangle}, the integral over the polyhedron $K_1$ satisfies the following proposition.
\begin{proposition}\label{K1}
\[\int_{K_1}  f(x_1+x_2+ \dots + x_n) dx_1 \, dx_2 \, \dots \, dx_n=\frac{1}{(n-1)! }\int_0^1 t^{n-1} f(t) dt. \]
\end{proposition}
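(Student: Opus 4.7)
The plan is to observe that on $K_1$ the box constraints $0 \leq x_i \leq 1$ are redundant: if $x_i \geq 0$ for all $i$ and $x_1+\cdots+x_n \leq 1$, then automatically $0 \leq x_i \leq 1$ for every $i$. Hence $K_1$ coincides with the standard simplex appearing in the classical formula \eqref{intoftriangle} from the introduction, and the proposition follows immediately by a direct application of that formula with the upper bound $1$.

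If a self-contained derivation is preferred, I would perform the change of variables $(x_1,\dots,x_n)\mapsto (x_1,\dots,x_{n-1},t)$ with $t=x_1+\cdots+x_n$, whose Jacobian determinant equals $1$. Under this substitution the integrand becomes $f(t)$, and for fixed $t\in[0,1]$ the remaining variables $(x_1,\dots,x_{n-1})$ range over the $(n-1)$-dimensional simplex
\[
\{(x_1,\dots,x_{n-1}): x_i\geq 0,\ x_1+\cdots+x_{n-1}\leq t\},
\]
whose Lebesgue measure is $t^{n-1}/(n-1)!$. Integrating $f(t)$ times this measure over $t\in[0,1]$ yields the desired right-hand side.

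There is no real obstacle here, only a small bookkeeping point to watch: one must verify that the redundant constraint observation is correct, i.e.\ that replacing the cube-times-half-space description of $K_1$ by the pure simplex description does not enlarge (or shrink) the region. This is transparent from $x_i\geq 0$ and $\sum_j x_j\leq 1$ forcing $x_i\leq 1$. Once that is noted, the proof is a one-line invocation of the classical result cited as \eqref{intoftriangle}.
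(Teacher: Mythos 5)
Your proposal is correct and follows exactly the paper's route: the paper offers no proof beyond the remark that Proposition \ref{K1} follows ``by formula for integral \eqref{intoftriangle}.'' Your explicit observation that the constraints $x_i\leq 1$ are redundant on $K_1$, together with the optional change-of-variables derivation, only makes that one-line invocation more careful, and both added steps are sound.
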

Let \[\displaystyle I_m=  \int_{K_m}  f(x_1+x_2+ \dots + x_n) dx_1 \, dx_2 \, \dots \, dx_n, \, \, \,  \quad  \, \, m=1, 2, \dots, n.\]
It is clear that $\displaystyle I = \sum_{m=1}^{n} I_m$. To evaluate the integral $I$, it is equivalent to calculate each $I_m \, (1\leq m\leq n)$. Define
\begin{equation}\label{jsm}
J_{s, m}= \int_{K_s} f(x_1+ \dots + x_n +m-s)  dx_1 \, dx_2 \, \dots \, dx_n,
\end{equation}
 where $s$ is an integer and $1 \leq s \leq m$. It is clear that $J_{m,m}=I_m$. For any $1 \leq s \leq m-1$, $J_{s, m}$ can be calculated from $I_s$. The following theorem shows that $I_m$ satisfies a recursive formula.

\begin{theorem}\label{recursive}
\begin{equation}\label{recursiveim}
I_m =\frac{1}{(n-1)! }\int_{0}^{1}( t+ m-1)^{n-1} f(t+ m-1) dt  - a_1 J_{1, m} -a_2 J_{2,m} - \dots - a_{m-1} J_{m-1, m},
\end{equation}
where
\[a_i= {m+n-i-1 \choose n-1} , \quad i=1,2, \dots, m-1. \]
\end{theorem}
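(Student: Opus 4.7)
The plan is to generalize the approach used in the $n=3$, $m=2$ computation by introducing the enlarged region
\[\widetilde{K}_m = \{(x_1, \dots, x_n) : m-1 \leq x_1+\cdots+x_n \leq m,\ x_i \geq 0\},\]
which contains $K_m$ (we drop the upper bounds $x_i \leq 1$) and on which $\int f(x_1+\cdots+x_n)\,dx$ can be evaluated directly via the simplex formula \eqref{intoftriangle}. Differencing that formula applied to $\{\sum x_i \leq m\}$ and $\{\sum x_i \leq m-1\}$ gives
\[\int_{\widetilde{K}_m} f(x_1+\cdots+x_n)\, dx_1 \cdots dx_n = \frac{1}{(n-1)!}\int_{m-1}^{m} t^{n-1} f(t)\, dt = \frac{1}{(n-1)!}\int_0^1 (t+m-1)^{n-1} f(t+m-1)\, dt,\]
which is precisely the first term of \eqref{recursiveim}. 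It then suffices to show that the difference $\int_{\widetilde{K}_m} - \int_{K_m}$ equals $\sum_{i=1}^{m-1} a_i J_{i,m}$.

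To extract that difference, I would partition $\widetilde{K}_m$ by the integer parts of its coordinates: up to a measure-zero boundary, each point lies in a unique cell
\[A_{\mathbf{k}} = \{x : k_i \leq x_i < k_i + 1 \text{ for all } i,\ m-1 \leq x_1+\cdots+x_n \leq m\},\]
indexed by a vector $\mathbf{k} = (k_1, \dots, k_n)$ of nonnegative integers with $|\mathbf{k}| := k_1+\cdots+k_n \leq m-1$. The substitution $y_i = x_i - k_i$ sends $A_{\mathbf{k}}$ bijectively onto $K_{m-|\mathbf{k}|}$: the inequalities $y_i \in [0,1)$ hold by construction and the sum constraint becomes $(m-|\mathbf{k}|)-1 \leq \sum y_i \leq m-|\mathbf{k}|$. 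Since $f(\sum x_i) = f\bigl(\sum y_i + |\mathbf{k}|\bigr) = f\bigl(\sum y_i + m - (m-|\mathbf{k}|)\bigr)$, the definition \eqref{jsm} gives
\[\int_{A_{\mathbf{k}}} f(x_1+\cdots+x_n)\, dx = J_{m-|\mathbf{k}|,\,m}.\]

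Finally, for each $s \in \{1, \dots, m\}$, the number of $\mathbf{k}$'s with $|\mathbf{k}| = m-s$ is $\binom{m-s+n-1}{n-1} = \binom{m+n-s-1}{n-1}$ by stars-and-bars (no extra cap on $k_i$ is needed, since $k_i \leq |\mathbf{k}| = m-s \leq m-1$ automatically). Summing over all cells and isolating the $s=m$ contribution, which corresponds to $\mathbf{k} = \mathbf{0}$ and $A_{\mathbf{0}} = K_m$ and hence equals $I_m$, yields
\[\int_{\widetilde{K}_m} f(x_1+\cdots+x_n)\, dx = I_m + \sum_{s=1}^{m-1} \binom{m+n-s-1}{n-1}\, J_{s,m},\]
which rearranges to \eqref{recursiveim}. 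The only real subtlety is the bookkeeping: verifying that the cells $\{A_{\mathbf{k}}\}$ partition $\widetilde{K}_m$ up to a null set and that the shift lands exactly in $K_{m-|\mathbf{k}|}$. Both follow directly from the definitions once written out carefully.
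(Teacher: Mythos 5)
Your proposal is correct and is essentially the paper's own argument: the paper works with the same enlarged region (written as $K_{m0}$ with the redundant bounds $0\le x_j\le m$), partitions it into the same unit cells indexed by integer parts, identifies each cell with some $K_s$ via the same shift, and counts the cells with the same stars-and-bars computation, the only difference being your reindexing by nonnegative $k_j$ instead of positive $i_j=k_j+1$. No substantive difference.
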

\begin{proof}
We consider the following region:
\begin{equation*}
\begin{split}
K_{m0}  & = \{(x_1, x_2,\dots, x_n):  \\
 & \  m-1 \leq x_1+x_2+ \dots + x_n \leq  m, \ \  0 \leq x_1 \leq m,\ \  0 \leq x_2 \leq m,  \ \ \dots, \ \   0 \leq x_n \leq m\}.
 \end{split}
 \end{equation*}
By Proposition \ref{K1},
 \begin{equation}\label{Km0int001}
 \begin{split}
 \int_{K_{m0}}  f(x_1+x_2+ \dots + x_n) dx_1 \, dx_2 \, \dots \, dx_n   =\frac{1}{(n-1)! }\int_{m-1}^{m} t^{n-1} f(t) dt \\
  = \frac{1}{(n-1)! }\int_{0}^{1}( t+ m-1)^{n-1} f(t+ m-1) dt.
 \end{split}
 \end{equation}
We then define subset $K_{i_1 i_2 \dots i_n}  \subset K_{m0}$ as follows
\begin{equation*}
\begin{split}
K_{i_1 i_2 \dots i_n}  & = \{(x_1, x_2,\dots, x_n):  \\
 & \  m-1 \leq x_1+x_2+ \dots + x_n \leq  m, \\
 & \   i_1-1 \leq x_1 \leq i_1,\ \  i_2-1 \leq x_2 \leq i_2,  \ \ \dots, \ \   i_n-1 \leq x_n \leq i_n \},
 \end{split}
 \end{equation*}
 where $i_1, i_2, \dots, i_n \in [1, m]$ are positive integers. It is clear that the intersection of any two subsets only happens on their boundaries and we need to classify these subsets so that the integral over such subsets can be evaluated easily. Note that by definition,
$K_{1,1, \dots, 1}= K_m$. In order to find the integral over $K_m$, we need to subtract the integrals over all the other nonempty subsets $K_{i_1 i_2 \dots i_n} \,  (i_1, i_2, \dots, i_n \in [1, m])$  from $ \displaystyle \int_{K_{m0}}  f(x_1+x_2+ \dots + x_n) dx_1 \, dx_2 \, \dots \, dx_n $.

First, we classify these nonempty subsets $K_{i_1 i_2 \dots i_n}  \,  (i_1, i_2, \dots, i_n \in [1, m])$.  For any  $K_{i_1 i_2 \dots i_n}$,   we define a transformation
\begin{equation}\label{transxx}
\tilde{x}_1=x_1-(i_1-1), \ \  \tilde{x}_2=x_2-(i_2-1),   \, \ \dots, \ \ \tilde{x}_n=x_n-(i_n-1).
\end{equation}
Then the set $K_{i_1 i_2 \dots i_n} $ becomes
\begin{equation*}
\begin{split}
\widetilde{K}_{i_1 i_2 \dots i_n}  &= \{(\tilde{x}_1 , \tilde{x}_2,\dots, \tilde{x}_n):  \\
 & \  m+n-(i_1+ i_2 + \dots + i_n)-1 \leq \tilde{x}_1+\tilde{x}_2+ \dots + \tilde{x}_n \leq  m+n-(i_1+ i_2 + \dots + i_n), \\
 & \   0 \leq \tilde{x}_1 \leq 1,\ \  0 \leq \tilde{x}_2 \leq 1,  \ \ \dots, \ \  0 \leq \tilde{x}_n \leq 1\}.
 \end{split}
 \end{equation*}
Let $s= m+n-(i_1+ i_2 + \dots + i_n)$. It is clear that $ K_{i_1 i_2 \dots i_n} \cong  \widetilde{K}_{i_1 i_2 \dots i_n}  = K_s$. Since $m+n-s= \sum_{j=1}^{n} i_j \geq n$, it follows that $s \leq m$. Note that if $s=m$, by definition in \eqref{jsm}, $ J_{m, m}= I_m$. If $s=0$, $K_{i_1 i_2 \dots i_n} \cong  \widetilde{K}_{i_1 i_2 \dots i_n}  = \{0\}$.  If $s< 0$, $K_{i_1 i_2 \dots i_n} \cong  \widetilde{K}_{i_1 i_2 \dots i_n}  = \varnothing$. So we only need to consider the case: $1 \leq s\leq m-1$. For any given $s \in [1, m-1]$, it follows that
\begin{eqnarray}\label{Jsmint001}
& & \int_{K_{i_1 i_2 \dots i_n}}  f(x_1+x_2+ \dots + x_n) dx_1 \, dx_2 \, \dots \, dx_n \nonumber \\
&=& \int_{\overline{K}_{i_1 i_2 \dots i_n}}   f(\tilde{x}_1+ \dots + \tilde{x}_n+ i_1+ \dots + i_n -n) d \tilde{x}_1 \dots d \tilde{x}_n \nonumber \\
&=& \int_{K_s} f(x_1+ \dots + x_n +m-s)  dx_1 \dots dx_n  \\
&=& J_{s, m}. \nonumber
\end{eqnarray}
It implies that the subsets $ K_{i_1 i_2 \dots i_n} \, (i_1, i_2, \dots, i_n \in [1, m], i_1+i_2+ \dots +i_n \neq n )$ with nonzero measure can be classified into $m-1$ classes. In each class, every element is identical to some subset $K_s$ after a shifting transformation in \eqref{transxx}: $ (x_1, x_2, \dots, x_n) \mapsto (\tilde{x}_1,\tilde{x}_2, \dots, \tilde{x}_n) $.

Next step is to fix $m$ and $s \ ( m-1 \leq s \leq 1)$, and find out how many subsets are identical to $K_s$. Since $s=  m+n-(i_1+ i_2 + \dots + i_n)$, it follows that
\begin{equation}\label{i1i2insolution}
m+n-s= i_1 + i_2+ \dots + i_n,  \quad \text{where} \, \  i_1, i_2, \dots, i_n  \ \text{are positive integers}.
\end{equation}
The number of positive integer solutions $(i_1, i_2, \dots, i_n)$ for \eqref{i1i2insolution} is ${m+n-s-1 \choose n-1}$. It follows that the total number of subsets identical to $K_s \, ( s\in [1, m-1])$ is
\begin{eqnarray}\label{as}
a_s &=& {m+n-s-1 \choose n-1} .
\end{eqnarray}
Therefore, by \eqref{Km0int001}, \eqref{Jsmint001} and \eqref{as}, integral $I_m$ satisfies
\begin{eqnarray}\label{Kmint}
 I_m&=&  \int_{K_m}  f(x_1+x_2+ \dots + x_n) dx_1 \, dx_2 \, \dots \, dx_n  \nonumber \\
&=&\frac{1}{(n-1)! }\int_{0}^{1}( t+ m-1)^{n-1} f(t+ m-1) dt  \\
 & &      \qquad  \quad    -  a_1 J_{1, m} -a_2 J_{2,m} - \dots - a_{m-1} J_{m-1, m},\nonumber
\end{eqnarray}
where $a_s \ (s=1, \dots, m-1)$  is defined by \eqref{as}. The proof is complete.
\end{proof}
By the cases $n=2, \, 3$ and $4$, we can show by induction that
\begin{equation}\label{formulaim}
 I_m=  \frac{1}{(n-1)!}\int_0^1 G_m(t) f(t+m-1)  dt,
 \end{equation}
where $G_m(t)$ is a polynomial. It follows that
\begin{equation}\label{jsm}
\begin{split}
 J_{s, m}&= \int_{K_s} f(x_1+ \dots + x_n +m-s)  dx_1 \dots dx_n \\
 & =  \frac{1}{(n-1)!} \int_0^1 G_s(t) f(t+m-1)  dt,
 \end{split}
 \end{equation}
 where $s$ is an integer and $1 \leq s \leq m$. The desired integral $I$ satisfies
\begin{equation}\label{formulaofi}
 I= \int_0^1 \int_0^1 \dots \int_0^1 f(x_1+x_2 + \dots +x_n) dx_1 \, dx_2 \, \dots \, dx_n= \sum_{m=1} ^{n} I_m.
 \end{equation}
 In order to find a formula for $I$, the only challenge left is to compute the polynomial $G_{m}(t)$ in equation \eqref{formulaim} for all $1 \leq m \leq n$.  For $m=1, 2, 3$ and $4$, a direct calculation shows that
\begin{equation}\label{gst}
\begin{split}
& G_1(t)= t^{n-1}, \\
& G_2(t)= (t+1)^{n-1}-{n \choose 1}\, t^{n-1}, \\
& G_3(t)= (t+2)^{n-1}- {n \choose 1}\, (t+1)^{n-1}+ {n \choose 2}\,  t^{n-1},\\
& G_4(t)= (t+3)^{n-1}- {n \choose 1} \, (t+2)^{n-1}+ {n \choose 2}\,  (t+1)^{n-1} -  {n \choose 3}\,  t^{n-1}.
\end{split}
\end{equation}
It is reasonable to believe that $G_m(t)$ follows a pattern. The following theorem actually proves this fact.
\begin{theorem}\label{formulaofim}
\begin{equation}\label{gmtformula}
G_m(t) = \sum_{i=1}^{m} (-1)^{i-1}(t+m-i)^{n-1}  {n \choose i-1}.
\end{equation}
\end{theorem}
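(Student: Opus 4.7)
My plan is to prove Theorem \ref{formulaofim} by strong induction on $m$, simultaneously establishing the one-variable form \eqref{formulaim} for $I_m$. The base case $m=1$ is immediate from Proposition \ref{K1}, which yields $G_1(t) = t^{n-1} = \binom{n}{0}(t+0)^{n-1}$. For the inductive step, suppose \eqref{formulaim} and \eqref{gmtformula} hold for every $s \le m-1$. The identity $J_{s,m} = \frac{1}{(n-1)!}\int_0^1 G_s(t)f(t+m-1)\,dt$ for $1 \le s \le m-1$ follows from \eqref{formulaim} applied to the shifted integrand $\tilde f(x) = f(x+m-s)$. Substituting this into the recursion from Theorem \ref{recursive} and reading off the coefficient of $f(t+m-1)$ inside the integral produces
\begin{equation*}
G_m(t) = (t+m-1)^{n-1} - \sum_{s=1}^{m-1}\binom{m+n-s-1}{n-1}G_s(t).
\end{equation*}

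Next I would expand each $G_s(t)$ via the inductive hypothesis and re-index the sum by setting $k = s-i$ so that every term is a scalar multiple of $(t+k)^{n-1}$ for some $k \in \{0,1,\dots,m-2\}$. Swapping the order of summation, the coefficient of $(t+m-1)^{n-1}$ is $1$, matching $\binom{n}{0}$ in the target. Comparing the coefficient of $(t+k)^{n-1}$ to that in $\sum_{j=0}^{m-1}(-1)^{m-j-1}\binom{n}{m-j-1}(t+j)^{n-1}$ and setting $r = m - k - 1$, the entire claim collapses to the single combinatorial identity
\begin{equation*}
\sum_{l=0}^{r}(-1)^{l}\binom{n}{l}\binom{n+r-1-l}{n-1} = 0, \qquad r \geq 1,
\end{equation*}
where the $l=r$ term has been moved across the equality and equals $(-1)^r\binom{n}{r}$, which is precisely the desired coefficient on the target side.

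The main obstacle is this identity, but it yields cleanly to generating functions: $(-1)^l\binom{n}{l}$ is the coefficient of $x^l$ in $(1-x)^n$ and $\binom{n+r-1-l}{n-1}$ is the coefficient of $x^{r-l}$ in $(1-x)^{-n}$, so by the Cauchy product the sum in question is $[x^r]\!\left((1-x)^n(1-x)^{-n}\right) = [x^r](1)$, which vanishes for every $r \ge 1$. Completing the induction then gives \eqref{gmtformula} for all $m$, and as a byproduct establishes the polynomial form \eqref{formulaim} that underlies Theorem \ref{maintheorem1}.
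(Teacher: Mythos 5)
Your proposal is correct and follows essentially the same route as the paper: strong induction on $m$ via the recursion of Theorem \ref{recursive}, expansion of each $G_s$ by the inductive hypothesis, and comparison of the coefficients of $(t+p)^{n-1}$, resting ultimately on the identity $\sum_{l=0}^{r}(-1)^{l}\binom{n}{l}\binom{n+r-1-l}{n-1}=0$ obtained from the product of $(1\pm x)^{n}$ with its reciprocal. The only difference is organizational: the paper invokes the generating-function identity only for the coefficient of $t^{n-1}$ and handles the coefficients of $(t+p)^{n-1}$ with $p\geq 1$ by a re-indexing that reduces them to the already-known coefficients of $G_k$, whereas you apply the single identity uniformly to every coefficient, which is a slightly cleaner bookkeeping of the same argument.
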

\begin{proof}
The proof is based on the recursive formula \eqref{recursiveim} in Theorem \ref{recursive} and the identity \eqref{jsm}. By formula \eqref{recursiveim},
\begin{eqnarray*}
I_m&=& \frac{1}{(n-1)! }\int_{0}^{1}( t+ m-1)^{n-1} f(t+ m-1) dt  -\sum_{i=1}^{m-1} a_i J_{i, m}  \\
&=&  \frac{1}{(n-1)! }\int_{0}^{1} G_m(t) f(t+ m-1) dt ,
\end{eqnarray*}
where
\begin{equation}\label{1forgmt}
 G_m(t)= ( t+ m-1)^{n-1}- \sum_{i=1}^{m-1} a_i G_i(t), \quad\text{and} \quad  a_i=  {m+n-i-1 \choose n-1}.
\end{equation}
We show this theorem by induction. It is clear that formula \eqref{gmtformula} of $G_m(t)$ holds for $m=1$. Assume that it holds for any $1 \leq m \leq k$. We need to show that formula \eqref{gmtformula} also holds for $m=k+1$. By Theorem \ref{recursive}, the polynomial $G_{k+1}(t)$ satisfies
\begin{equation}\label{gk1tform}
\begin{split}
& G_{k+1}(t) \\
=&  (t+k)^{n-1} + \sum_{i=1}^{k} {k+1+n-i-1 \choose n-1} \sum_{j=1}^{i} (-1)^{j} (t+i-j)^{n-1}  {n \choose j-1}.
\end{split}
\end{equation}
By formula \eqref{gmtformula}, we can consider each $G_m (t) \, (1\leq m \leq k)$ as a polynomial of $(t+m-j)^{n-1} \, (j=1, 2, \dots, m)$ with coefficient $(-1)^{j-1} {n \choose j-1}$.  Then identity \eqref{gk1tform} implies that the coefficient of $(t+p)^{n-1}$ in $G_{k+1}(t)$ is
\begin{equation}\label{tpngk1}
L_{p}\left(G_{k+1}(t)\right) =\sum_{i=p+1}^{k}  {k+1+n-i-1 \choose n-1}  (-1)^{i-p} {n \choose i-p-1},
\end{equation}
where integer $p \in [0, k-1]$. Similarly, $G_{k}(t)$ satisfies
\begin{eqnarray*}
G_{k}(t) &=&  (t+k-1)^{n-1} + \sum_{i=1}^{k-1} {k+n-i-1 \choose n-1} \sum_{j=1}^{i} (-1)^{j} (t+i-j)^{n-1}  {n \choose j-1}.
\end{eqnarray*}
And the coefficient of  $(t+p)^{n-1} \, (p \in [0, k-2])$ in $G_{k}(t)$ is
\begin{equation}\label{tpngk}
\sum_{i=p+1}^{k-1}  {k+n-i-1 \choose n-1}  (-1)^{i-p} {n \choose i-p-1}.
\end{equation}
Note that $\displaystyle G_{k}(t)= \sum_{i=1}^{k} (-1)^{i-1}(t+k-i)^{n-1}  {n \choose i-1}.$ It follows that
\begin{equation}\label{ikpgk}
\sum_{i=p+1}^{k-1}  {k+n-i-1 \choose n-1}  (-1)^{i-p} {n \choose i-p-1}= (-1)^{k-p-1}  {n \choose k-p-1}.
\end{equation}
If $p\neq 0$, let $q=p-1$. By identity \eqref{ikpgk}, the coefficient of $(t+p)^{n-1}$ in equation \eqref{tpngk1} satisfies
\begin{equation}\label{gk1lp00}
\begin{split}
& L_{p}\left(G_{k+1}(t)\right) \\
=&\sum_{i=p+1}^{k}  {k+1+n-i-1 \choose n-1}  (-1)^{i-p} {n \choose i-p-1}\\
=& \sum_{i=q+2}^{k}  {k+1+n-i-1 \choose n-1}  (-1)^{i-q-1} {n \choose i-q-2}\\
=& \sum_{i=q+1}^{k-1}  {k+n-i-1 \choose n-1}  (-1)^{i-q} {n \choose i-q-1}\\
=& (-1)^{k-q-1}  {n \choose k-q-1}= (-1)^{k-p}  {n \choose k-p}.
\end{split}
\end{equation}
Identity \eqref{gk1lp00} holds for all integer $p \in [1, k-1]$. The only case left is $p=0$. If $p=0$, by equation \eqref{tpngk1}, the coefficient of $t^{n-1}$ in $G_{k+1}(t)$ is
\begin{equation}\label{k10gk1}
L_0 \left(G_{k+1}(t) \right)= \sum_{i=1}^{k}  {k+1+n-i-1 \choose n-1}  (-1)^{i} {n \choose i-1}.
\end{equation}
Next, we show that $\displaystyle L_0 \left(G_{k+1}(t) \right)=  (-1)^{k} {n \choose k}$. Note that by  the binomial theorem, the coefficient of $x^{k+1}$ in $(1+x)^{-n}(1+x)^{n}$ is
\begin{equation}\label{l0gk1}
\begin{split}
& \sum_{i=0}^{k}(-1)^{i} {n+i-1 \choose i}  {n \choose k-i} \\
=&\sum_{i=0}^{k}(-1)^{i} {n+i-1 \choose n-1}  {n \choose k-i}\\
=& \sum_{j=1}^{k+1}  {k+1+n-j-1 \choose n-1}  (-1)^{k+1-j} {n \choose j-1} \quad (j=k+1-i)\\
=&(-1)^{k+1} \left[ L_0 \left(G_{k+1}(t) \right)+  (-1)^{k+1} {n \choose k}  \right].
\end{split}
\end{equation}
On the other hand,  for nonnegative integer $k$, the coefficient of $x^{k+1}$ in $(1+x)^{-n}(1+x)^{n}=1$ is always $0$. Hence, \eqref{l0gk1} implies that
\begin{equation}\label{gk1l00}
L_0 \left(G_{k+1}(t) \right)= (-1)^{k} {n \choose k}.
\end{equation}
Therefore, by identities \eqref{gk1lp00} and \eqref{gk1l00}, it follows that
\begin{equation}
\begin{split}
& G_{k+1}(t) \\
=& (t+k)^{n-1} +\sum_{p=0}^{k-1} (-1)^{k-p}  {n \choose k-p} (t+p)^{n-1}\\
= & \sum_{i=1}^{k+1} (-1)^{i-1}(t+k+1-i)^{n-1}  {n \choose i-1}.
\end{split}
\end{equation}
The proof is complete.
\end{proof}

%

\section{Application to Loggamma function}\label{application}
In this section, we consider the integral
\begin{equation}\label{ndcase}
I= \int_0^1 \int_0^1 \dots \int_0^1 f(x_1+x_2 + \dots +x_n) dx_1 \, dx_2 \, \dots \, dx_n
\end{equation}
with a given function $f(x)= \log \Gamma(x)$. Integral of loggamma function has its own importance in many parts of mathematics \cite{AM, CS}. Actually, the case when $n=2$ is a problem proposed by Ovidiu Furdur \cite{OF} in ``Problems and Solutions" in The college Mathematics Journal and one of its solutions is proposed by Geng-zhe  Chang \cite{CH1}. When it comes to general dimension $n$, it is quite a challenge to simplify it. 

After the preparation of Theorem \ref{maintheorem1} in Section \ref{general}, we can evaluate the integral \eqref{ndcase} with $f(x)= \log \Gamma(x)$.  A nice formula is presented in Theorem \ref{maintheorem2}. 
\begin{theorem}\label{maintheorem2}
\begin{eqnarray}\label{loggammaformula}
I=I(n)&=& \int_0^1 \int_0^1 \dots \int_0^1 \log \Gamma(x_1+x_2 + \dots +x_n) dx_1 \, dx_2 \, \dots \, dx_n \nonumber \\
&=&\frac{1}{2} \log (2 \pi) - \frac{n-1}{2}H_n + \sum_{k=2}^{n-1} \frac{  (-1)^{n+k+1} k^n }{n!} {n-1 \choose k} \log k, \nonumber
\end{eqnarray}
where $\displaystyle H_n=\sum_{k=1}^{n} \frac{1}{k} =1+\frac{1}{2}+ \dots + \frac{1}{n}$.
\end{theorem}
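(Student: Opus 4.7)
The plan is to apply Theorem~\ref{maintheorem1} to $f=\log\Gamma$, obtaining
\[
I=\frac{1}{(n-1)!}\sum_{m=1}^n\int_0^1 G_m(t)\log\Gamma(t+m-1)\,dt,
\]
and then to exploit the functional equation $\log\Gamma(t+m-1)=\log\Gamma(t)+\sum_{j=0}^{m-2}\log(t+j)$ (empty sum for $m=1$) to separate each one-dimensional integral into a $\log\Gamma$ part and a pure-logarithm part. The first simplification I would prove is that, after swapping indices in $G_m(t)$ and summing over $m$,
\[
\sum_{m=1}^n G_m(t)=\sum_{k=0}^{n-1}(-1)^{n-1-k}\binom{n-1}{k}(t+k)^{n-1}=\Delta^{n-1}[x^{n-1}](t)=(n-1)!,
\]
a constant. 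The $\log\Gamma(t)$ piece then collapses to $\int_0^1\log\Gamma(t)\,dt=\tfrac12\log(2\pi)$ by Raabe's formula, accounting for the first term of the target.

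Next, for the remaining $\log(t+j)$ piece, I would swap the summation order and use the analogous partial-sum identity $\sum_{m=1}^{M}G_m(t)=\sum_{\ell=0}^{M-1}(-1)^\ell\binom{n-1}{\ell}(t+M-1-\ell)^{n-1}$ to reorganize everything in terms of $\ell$. Substituting $u=t+j$, then $v=u-\ell$, and telescoping $\sum_{j=0}^{n-2}\int_0^1\log(t+j)\,dt=(n-1)\log(n-1)-(n-1)$, I obtain
\[
I-\tfrac12\log(2\pi)=(n-1)\log(n-1)-(n-1)-\frac{1}{(n-1)!}\sum_{\ell=0}^{n-1}(-1)^\ell\binom{n-1}{\ell}\int_0^{n-1-\ell}v^{n-1}\log(v+\ell)\,dv.
\]
Integration by parts turns each inner integral into the boundary term $\tfrac{(n-1-\ell)^n\log(n-1)}{n}$ minus $\tfrac{1}{n}\int_0^{n-1-\ell}\tfrac{v^n}{v+\ell}\,dv$. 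The boundary coefficient evaluates via $\sum_{\ell}(-1)^\ell\binom{n-1}{\ell}(n-1-\ell)^n=\tfrac{n!(n-1)}{2}$, itself a consequence of $\Delta^{n-1}[x^n](0)=\binom{n}{2}(n-1)!$ together with the Stirling identity $S(n,n-1)=\binom{n}{2}$.

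The remaining step, which I expect to be the main obstacle, is handling the rational integrals. The long division $\tfrac{v^n}{v+\ell}=\sum_{r=0}^{n-1}(-\ell)^r v^{n-1-r}+\tfrac{(-\ell)^n}{v+\ell}$ produces three pieces after integration. The $(-\ell)^n\log(n-1)$ piece, using $\sum_\ell(-1)^\ell\binom{n-1}{\ell}(-\ell)^n=-\binom{n}{2}(n-1)!$, exactly cancels the earlier $(n-1)\log(n-1)$ contribution together with the IBP boundary counterpart; the $-(-\ell)^n\log\ell$ piece assembles into $\sum_{k=2}^{n-1}\tfrac{(-1)^{n+k+1}k^n}{n!}\binom{n-1}{k}\log k$, matching the last term of the target. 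What remains is the polynomial residue
\[
U=\frac{1}{n!}\sum_{s=1}^{n}\frac{1}{s}\sum_{\ell=0}^{n-1}(-1)^\ell\binom{n-1}{\ell}(-\ell)^{n-s}(n-1-\ell)^s.
\]
To evaluate it, I would expand $(n-1-\ell)^s$ by the binomial theorem; the inner $\ell$-sum then reduces to combinations of $\sum_\ell(-1)^\ell\binom{n-1}{\ell}\ell^k$, which vanishes for $k<n-1$ because $\Delta^{n-1}$ annihilates polynomials of degree less than $n-1$, so only the top two powers $k=n-1,n$ survive. Applying $S(n-1,n-1)=1$ and $S(n,n-1)=\binom{n}{2}$ collapses the inner sum to $(n-1)!\,(n-1)(s-n/2)$, whence
\[
U=\frac{n-1}{n}\sum_{s=1}^n\!\left(1-\frac{n}{2s}\right)=(n-1)\left(1-\tfrac{H_n}{2}\right).
\]
The leading $(n-1)$ cancels the $-(n-1)$ from the telescoping sum, leaving exactly $-\tfrac{n-1}{2}H_n$; reassembling with the $\tfrac12\log(2\pi)$ and the $\log k$ sum produces the claimed formula.
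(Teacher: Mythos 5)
Your proposal is correct, and its opening skeleton coincides with the paper's: apply Theorem \ref{maintheorem1} with $f=\log\Gamma$, split $\log\Gamma(t+m-1)$ via the functional equation, use $\sum_{m=1}^n G_m(t)=(n-1)!$ and the partial sums $T_k$ (the content of Lemma \ref{id1}) together with Raabe's formula to isolate $\tfrac12\log(2\pi)$. Where you genuinely diverge is in the treatment of the logarithmic remainder, which is the bulk of the work. The paper keeps the $n-1$ unit intervals separate, integrates by parts on each $[0,1]$ piece (so boundary terms in $\log k$ and $\log(k-1)$ must telescope across $k$), and splits the result into the sums $S_1(k)$ and $S_2(k)$ evaluated in Lemmas \ref{id2} and \ref{id3} via a list of identities quoted from Shi's book. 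You instead glue the unit intervals together first, reducing everything to the single family $\int_0^{n-1-\ell}v^{n-1}\log(v+\ell)\,dv$, perform one integration by parts and one long division, and dispatch every resulting alternating sum uniformly by finite-difference calculus: $\Delta^{n-1}$ annihilates polynomials of degree less than $n-1$, and the two surviving top coefficients are governed by $S(n-1,n-1)=1$ and $S(n,n-1)=\binom{n}{2}$. I verified the long division $\frac{v^n}{v+\ell}=\sum_{r=0}^{n-1}(-\ell)^rv^{n-1-r}+\frac{(-\ell)^n}{v+\ell}$, the cancellation of the two $-\tfrac{n-1}{2}\log(n-1)$ contributions against $(n-1)\log(n-1)$, the assembly of the $\log k$ sum with the correct sign $(-1)^{n+k+1}$, and the evaluation $U=(n-1)\bigl(1-\tfrac{H_n}{2}\bigr)$; all agree with the paper. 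Your route buys a shorter and more conceptual computation --- the single evaluation of $U$ replaces essentially all of Lemma \ref{id3} and Appendix A --- while the paper's version, though longer, stays entirely within elementary named identities and never requires integrals over intervals longer than $[0,1]$.
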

The proof of this theorem is based on  Theorem \ref{maintheorem1} and several combinatorial identities in Jihuai Shi's book \cite{SH}.
Let $f(x) = \log \Gamma(x)$. By Theorem \ref{maintheorem1}, the  integral $I$ can be simplified by using several combinatorial identities. Note that $\Gamma(t+1)= t \Gamma(t)$ and $\displaystyle G_m(t)= \sum_{i=1}^{m} (-1)^{i-1}(t+m-i)^{n-1}  {n \choose i-1}$. The integral $I$ becomes
\begin{eqnarray}\label{Iformula1}
&I =& \frac{1}{(n-1)!} \sum_{m=1}^{n} \int_0^1  G_m(t) \log \Gamma(t+m-1)  dt \\
&=&  \frac{1}{(n-1)!}  \int_0^1 \sum_{m=1}^n G_m(t) \log \Gamma(t) dt +  \frac{1}{(n-1)!}  \int_0^1 \sum_{k=2}^n  \sum_{m=k}^{n} G_m(t) \log(t+ k-2) dt. \nonumber 
\end{eqnarray}

\begin{lemma}\label{id1}
\begin{equation*}
\sum_{m=k}^n G_m(t)= (n-1)!-  \sum_{m=1}^{k-1} {n-1 \choose k-m-1} (-1)^{k-m-1} (t+m-1)^{n-1},
\end{equation*}
Specially when $k=1$, $\displaystyle \sum_{m=1}^n G_m(t)= (n-1)!$. 
\end{lemma}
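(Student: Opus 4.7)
I plan to proceed by induction on $k$, treating $k=1$ as the base case and reducing the inductive step to Pascal's rule.

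For the base case, reindex $G_m(t)$ by setting $j=i-1$ to obtain
\begin{equation*}
G_m(t)=\sum_{j=0}^{m-1}(-1)^{j}\binom{n}{j}(t+m-1-j)^{n-1},
\end{equation*}
substitute into $\sum_{m=1}^{n}G_m(t)$, and swap the order of summation via the substitution $p=m-1-j$, so that $p$ runs over $[0,n-1]$ and, for each fixed $p$, the inner index $\ell=m-1-p$ runs over $[0,n-1-p]$. The standard partial-sum identity $\sum_{\ell=0}^{r}(-1)^{\ell}\binom{n}{\ell}=(-1)^{r}\binom{n-1}{r}$ collapses the inner sum and yields
\begin{equation*}
\sum_{m=1}^{n}G_m(t)=\sum_{p=0}^{n-1}(-1)^{n-1-p}\binom{n-1}{p}(t+p)^{n-1},
\end{equation*}
which is the $(n-1)$-st forward finite difference of the monomial $t^{n-1}$, and hence equals $(n-1)!$ (the leading coefficient times $(n-1)!$). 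This handles $k=1$.

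For the inductive step, assume the claim holds for $k$ and use the elementary telescoping $G_k(t)=\sum_{m=k}^{n}G_m(t)-\sum_{m=k+1}^{n}G_m(t)$. Subtracting the conjectured formula at $k+1$ from the inductive hypothesis at $k$ and grouping like powers $(t+m-1)^{n-1}$ produces a boundary term equal to $(t+k-1)^{n-1}$ at $m=k$, together with coefficients $(-1)^{k-m}\bigl[\binom{n-1}{k-m}+\binom{n-1}{k-m-1}\bigr]=(-1)^{k-m}\binom{n}{k-m}$ for $1\le m\le k-1$, where the last equality is Pascal's rule. On the other hand, reindexing the defining sum $G_k(t)=\sum_{i=1}^{k}(-1)^{i-1}\binom{n}{i-1}(t+k-i)^{n-1}$ via $m=k-i+1$ produces exactly the same polynomial, which closes the induction.

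The main obstacle is the bookkeeping in the base case: identifying the substitution $p=m-1-j$ that turns the inner sum into a pure partial alternating binomial sum, and then recognizing the resulting expression as the $(n-1)$-st finite difference of $t^{n-1}$. Once that identification is made, the inductive step is a clean Pascal-rule calculation and requires no further combinatorial input.
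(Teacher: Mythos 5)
Your proposal is correct and follows essentially the same route as the paper: the base case $\sum_{m=1}^{n}G_m(t)=(n-1)!$ is exactly the paper's computation (interchange the double sum, apply $\sum_{\ell=0}^{r}(-1)^{\ell}\binom{n}{\ell}=(-1)^{r}\binom{n-1}{r}$, then recognize the $(n-1)$-st finite difference of $t^{n-1}$, which the paper phrases via the identity $\sum_{k=0}^{n}(-1)^{k}\binom{n}{k}(x+n-k)^{n}=n!$). The only cosmetic difference is that the paper obtains the general-$k$ statement by computing $\sum_{m=1}^{k-1}G_m(t)$ in closed form with the same partial alternating binomial sum and subtracting it from $(n-1)!$, whereas you recover it by induction on $k$ with Pascal's rule --- the same combinatorial content packaged as a telescoping argument.
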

\begin{proof}
Note that $\displaystyle G_m(t)= \sum_{i=1}^{m} (-1)^{i-1}(t+m-i)^{n-1}  {n \choose i-1}$. It follows that
\begin{eqnarray*}
\sum_{m=1}^k G_m(t)&=& \sum_{m=1}^k  \sum_{i=1}^{m} (-1)^{i-1}(t+m-i)^{n-1}  {n \choose i-1}\\
&=& \sum_{m=1}^k  \sum_{i=0}^{k-m} (-1)^i {n \choose i} (t+m-1)^{n-1}. 
\end{eqnarray*}
By a combinatorial identity $\displaystyle \sum_{i=0}^{m} (-1)^i {n \choose i} =(-1)^m {n-1 \choose m}  \, (m <n)$, we have
\[ \sum_{m=1}^k  \sum_{i=0}^{k-m} (-1)^i {n \choose i} (t+m-1)^{n-1}= \sum_{m=1}^k {n-1 \choose k-m} (-1)^{k-m} (t+m-1)^{n-1}. \]
Hence, 
\[ \sum_{m=1}^k G_m(t) =  \sum_{m=1}^k {n-1 \choose k-m} (-1)^{k-m} (t+m-1)^{n-1}.  \]
In special when $k=n$,  combinatorial identity $\displaystyle  \sum_{k=0}^n (-1)^k {n \choose k} (x+n-k)^n = n!$ implies
\begin{eqnarray*}
\sum_{m=1}^n G_m(t) &= & \sum_{m=1}^n {n-1 \choose n-m} (-1)^{n-m} (t+m-1)^{n-1} \\
&=& \sum_{k=0}^{n-1}  {n-1 \choose k} (-1)^{k} (t+n-1-k)^{n-1}\\
 &=& (n-1)!.
\end{eqnarray*}
Therefore,
\begin{eqnarray*}
\sum_{m=k}^n G_m(t)&=&  \sum_{m=1}^n G_m(t)  - \sum_{m=1}^{k-1} G_m(t)\\
&=&  (n-1)!- \sum_{m=1}^{k-1} {n-1 \choose k-m-1} (-1)^{k-m-1} (t+m-1)^{n-1}.
\end{eqnarray*}
The proof is complete.
\end{proof}

Let 
\[ T_k= \sum_{m=1}^{k} {n-1 \choose k-m} (-1)^{k-m} (t+m-1)^{n-1}= \sum_{m=0}^{k-1} {n-1 \choose m} (-1)^{m} (t+k-m-1)^{n-1}.\] Then $\displaystyle \sum_{m=k}^n G_m(t)= (n-1)! - T_{k-1}$. By applying Lemma \ref{id1}, \eqref{Iformula1} becomes
\begin{equation}\label{formulaofI}
\begin{split}
I=&    \int_0^1 \log \Gamma(t) dt + \int_0^1 \sum_{k=0}^{n-2} \log (t+k) dt - \frac{1}{(n-1)!} \int_0^1  \sum_{k=1}^{n-1} T_k  \log (t+k-1) dt   \\
 = & \frac{1}{2} \log(2 \pi) + (n-1) \log(n-1) - n+1 - \frac{1}{(n-1)!} \int_0^1  \sum_{k=1}^{n-1} T_k  \log (t+k-1) dt. 
 \end{split}
\end{equation}
Then the calculation of $I$ is reduced to compute $\displaystyle \int_0^1  \sum_{k=1}^{n-1} T_k  \log (t+k-1) dt$. Note that $T_1=t^{n-1}$, 
\begin{eqnarray*}
&& \int_0^1 T_k  \log (t+k-1) dt\\
&=&  \sum_{m=0}^{k-1} {n-1 \choose m}  (-1)^{m} \int_0^1 (t+k-m-1)^{n-1} \log (t+ k-1) dt,
\end{eqnarray*}
and for $k >1$,
\begin{eqnarray*} 
& & \int_0^1 (t+k-m-1)^{n-1} \log (t+ k-1) dt \\
&=& \frac{(k-m)^n \log k - (k-m-1)^n \log (k-1) }{n} - \int_0^1 \frac{(t+k-m-1)^n}{n(t+k-1)} dt\\
&=& \frac{(k-m)^n- (-m)^n}{n} \log k - \frac{(k-m-1)^n- (-m)^n}{n} \log(k-1)+\\
&& \quad - \frac{1}{n} \sum_{r=1}^n \frac{k^r- (k-1)^r}{r}{n \choose r} (-m)^{n-r}.
\end{eqnarray*}

Let $S_1(1)=0$, 

\[ S_1(k)=\sum_{m=0}^{k-1} {n-1 \choose m}  (-1)^{m} \left[  \frac{(k-m)^n- (-m)^n}{n} \log k  - \frac{(k-m-1)^n- (-m)^n }{n} \log(k-1) \right],\]
 and $\displaystyle S_2(k) = \frac{1}{n} \sum_{m=0}^{k-1} {n-1 \choose m} (-1)^m  \sum_{r=1}^n \frac{k^r- (k-1)^r}{r}{n \choose r} (-m)^{n-r}$. It follows that 
 \begin{equation}\label{formulaintTk}
 \int_0^1  \sum_{k=1}^{n-1} T_k  \log (t+k-1) dt =  \sum_{k=1}^{n-1} S_1(k) - \sum_{k=1}^{n-1} S_2(k). 
\end{equation}

\begin{lemma}\label{id2}
\begin{eqnarray*}
 \sum_{k=1}^{n-1} S_1(k) = \frac{1}{n} \sum_{k=2}^{n-2} {n-1 \choose k} (-1)^k (-k)^n \log k + \frac{\log (n-1)}{n} \left[ n! (n-1) -(n-1)^n  \right].
\end{eqnarray*}
\end{lemma}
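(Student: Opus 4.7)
The plan is to split $S_1(k)$ into its $\log k$ and $\log(k-1)$ contributions: for $k\geq 2$ write $S_1(k) = A(k)\log k - B(k)\log(k-1)$, where
\[
A(k) \;=\; \frac{1}{n}\sum_{m=0}^{k-1}\binom{n-1}{m}(-1)^m\bigl[(k-m)^n - (-m)^n\bigr]
\]
and $B(k)$ is obtained from $A(k)$ by replacing $(k-m)^n$ with $(k-m-1)^n$. Since $B(1)=0$, the prescription $S_1(1)=0$ is consistent with the formally singular $\log 0$ term, so I can work with this representation uniformly for $1\leq k\leq n-1$.

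The next step is to regroup $\sum_{k=1}^{n-1}S_1(k)$ by collecting the coefficient of each $\log k$. Shifting the index in the $B$-sum by $k\mapsto k+1$ gives
\[
\sum_{k=1}^{n-1}S_1(k) \;=\; A(n-1)\log(n-1) \;+\; \sum_{k=2}^{n-2}\bigl[A(k)-B(k+1)\bigr]\log k,
\]
where the $k=1$ contribution of the shifted sum drops out because $\log 1=0$. Comparing the defining formulas for $A(k)$ and $B(k+1)$ shows that they agree term-by-term except for the extra $m=k$ summand in $B(k+1)$, so a direct computation yields
\[
A(k)-B(k+1) \;=\; \frac{1}{n}\binom{n-1}{k}(-1)^k(-k)^n,
\]
which recovers the first piece of the claimed right-hand side.

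The main remaining obstacle is to evaluate $A(n-1)$ in closed form as $\bigl[n!(n-1)-(n-1)^n\bigr]/n$. I would attack this by extending the sum defining $nA(n-1)$ to include the $m=n-1$ index (which contributes the isolated term $(n-1)^n$ and can thus be removed at the end) and then invoking the classical inclusion-exclusion identity
\[
\sum_{m=0}^{N}(-1)^m\binom{N}{m}m^j \;=\; (-1)^N N!\,S(j,N),
\]
where $S(j,N)$ is the Stirling number of the second kind. Taking $N=n-1$ and $j=n$ together with the well-known value $S(n,n-1)=\binom{n}{2}$ gives
\[
\sum_{m=0}^{n-1}(-1)^m\binom{n-1}{m}m^n \;=\; (-1)^{n-1}\,\frac{n!(n-1)}{2},
\]
and the substitution $m\mapsto n-1-m$ converts this into $\sum_{m=0}^{n-1}(-1)^m\binom{n-1}{m}(n-1-m)^n = \tfrac{n!(n-1)}{2}$. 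Combining these two symmetric pieces inside $nA(n-1)+(n-1)^n$ produces $n!(n-1)$, delivering the desired coefficient of $\log(n-1)$ and completing the proof. The bookkeeping around the $m=n-1$ boundary term and the sign interplay between $(-m)^n$ and $m^n$ is where the argument is most delicate.
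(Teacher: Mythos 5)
Your proposal is correct and follows essentially the same route as the paper: regroup the double sum by the coefficient of each $\log k$, note that $A(k)-B(k+1)$ collapses to the single term $\frac{1}{n}\binom{n-1}{k}(-1)^k(-k)^n$, and evaluate the boundary coefficient $A(n-1)$ via the two alternating sums $\sum_{m}(-1)^m\binom{n-1}{m}(n-1-m)^n$ and $\sum_{m}(-1)^m\binom{n-1}{m}(-m)^n$. The only (cosmetic) difference is that you close these sums with the Stirling-number identity $\sum_{m=0}^{N}(-1)^m\binom{N}{m}m^j=(-1)^N N!\,S(j,N)$ together with $S(n,n-1)=\binom{n}{2}$, whereas the paper invokes the equivalent finite-difference identity $\sum_{k=0}^{n}\binom{n}{k}(-1)^k(x-k)^{n+1}=\bigl(x-\tfrac{n}{2}\bigr)(n+1)!$; both yield the same values $\tfrac{n-1}{2}n!$ and $(n-1)^n-\tfrac{n-1}{2}n!$ and hence the same coefficient of $\log(n-1)$.
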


\begin{proof}
Note that $S_1(1)=0$. 
\begin{eqnarray*}
 && \sum_{k=1}^{n-1} S_1(k) \\
&=&  \sum_{k=1}^{n-1} \sum_{m=0}^{k-1} {n-1 \choose m} (-1)^m \left[  \frac{(k-m)^n- (-m)^n}{n} \log k  - \frac{(k-m-1)^n- (-m)^n }{n} \log(k-1) \right]\\
&=& \frac{1}{n} \sum_{k=2}^{n-2} {n-1 \choose k} (-1)^k (-k)^n \log k + \\
&& \quad + \frac{1}{n} \sum_{m=0}^{n-2}   {n-1 \choose m } (-1)^m \left[(n-m-1)^n- (-m)^n \right] \log(n-1).
\end{eqnarray*}
By combinatorial identity $\displaystyle \sum_{k=0}^n {n \choose k} (-1)^k  (x-k)^{n+1} = (x-\frac{n}{2}) (n+1)!$, we have  
\[ \sum_{m=0}^{n-2}    {n-1 \choose m } (-1)^m (n-1-m)^n= \sum_{m=0}^{n-1} {n-1 \choose m } (-1)^m (n-1-m)^n= \frac{n-1}{2} n!, \]
\begin{eqnarray*}
 & & \sum_{m=0}^{n-2}    {n-1 \choose m } (-1)^m (-m)^n \\
 &=& \sum_{m=0}^{n-1}    {n-1 \choose m } (-1)^m (-m)^n - (-1)^{n-1}(1-n)^n= (n-1)^n- \frac{n-1}{2} n!.
\end{eqnarray*}
Hence, 
\begin{eqnarray*}
 \sum_{k=1}^{n-1} S_1(k)= \frac{1}{n} \sum_{k=2}^{n-2} {n-1 \choose k} (-1)^k (-k)^n \log k + \frac{\log (n-1)}{n} \left[ n! (n-1) -(n-1)^n  \right].
\end{eqnarray*}
The proof is complete.
\end{proof}

The following Lemma calculates the sum $\displaystyle \sum_{k=1}^{n-1} S_2(k) $. Here we only list the result. For reader's convenience, the proof of it is presented in the Appendix A.
\begin{lemma}\label{id3}
\begin{eqnarray*}
 \sum_{k=1}^{n-1} S_2(k) = (n-1)!(n-1) - \frac{n-1}{2}H_n(n-1)!,
\end{eqnarray*}
where $\displaystyle H_n= \sum_{i=1}^n \frac{1}{i}$.
\end{lemma}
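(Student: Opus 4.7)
The plan is to evaluate $\sum_{k=1}^{n-1} S_2(k)$ by interchanging the summation order and reducing to classical binomial and harmonic-number identities. First I would swap the summations over $k$ and $m$: the region $1 \le k \le n-1$, $0 \le m \le k-1$ rewrites as $0 \le m \le n-2$, $m+1 \le k \le n-1$, and the inner sum over $k$ telescopes, $\sum_{k=m+1}^{n-1}[k^r-(k-1)^r] = (n-1)^r - m^r$. This gives
\[
\sum_{k=1}^{n-1} S_2(k) = \frac{1}{n}\sum_{m=0}^{n-2}\binom{n-1}{m}(-1)^m \sum_{r=1}^n \frac{\binom{n}{r}}{r}(-m)^{n-r}\bigl[(n-1)^r - m^r\bigr],
\]
which splits as $A - B$, with $A$ coming from the $(n-1)^r$ piece and $B$ from the $m^r$ piece.

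For $B$, I would use $(-m)^{n-r}m^r = (-1)^{n-r}m^n$ to factor out $m^n$ and then collapse the $r$-sum via the classical identity $\sum_{r=1}^n \binom{n}{r}(-1)^{r-1}/r = H_n$. The remaining $m$-sum $\sum_{m=0}^{n-2}\binom{n-1}{m}(-1)^m m^n$ I would treat by a standard finite-difference computation: for the full range $m\in[0,n-1]$ the Stirling-number value $S(n,n-1) = \binom{n}{2}$ gives $(-1)^{n-1}(n-1)n!/2$, and subtracting the boundary term at $m = n-1$ yields the truncated-range value. For $A$, I would rewrite the inner $r$-sum as an integral using $\sum_{r=1}^n \binom{n}{r}\frac{a^r b^{n-r}}{r} = \int_0^a \frac{(u+b)^n - b^n}{u}\,du$ (obtained by differentiating in $a$); with $a = n-1$, $b = -m$ and the substitution $u = (n-1)t$ this becomes $\int_0^1 \frac{((n-1)t - m)^n - (-m)^n}{t}\,dt$. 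Interchanging the $m$-sum with the integral, the numerator is, by the same finite-difference argument applied to the polynomial $((n-1)t-m)^n$ of degree $n$ in $m$, the explicit linear function $(n-1)n!\,t - (n-1)^n[1 - (1-t)^n]$ of $t$. The integral then closes via $\int_0^1 \frac{1-(1-t)^n}{t}\,dt = H_n$.

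Assembling these computations, I expect $A = (n-1)(n-1)! - \frac{(n-1)^n H_n}{n}$ and $B = \frac{(n-1)H_n(n-1)!}{2} - \frac{(n-1)^n H_n}{n}$. The two $\frac{(n-1)^n H_n}{n}$ terms cancel in $A - B$, leaving exactly $(n-1)!(n-1) - \frac{n-1}{2}H_n(n-1)!$ as claimed. The main obstacle is the finite-difference step: one must recognize that for a polynomial $p$ of degree $n$ in $m$, only the two Stirling-number contributions $S(n-1,n-1) = 1$ and $S(n,n-1) = \binom{n}{2}$ survive in $\sum_{m=0}^{n-1}\binom{n-1}{m}(-1)^m p(m)$, and one must carefully track the boundary term at $m = n-1$ that was excluded from the original summation range.
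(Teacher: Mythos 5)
Your proof is correct, and its skeleton coincides with the paper's: interchange the $k$- and $m$-sums, telescope $\sum_{k=m+1}^{n-1}[k^r-(k-1)^r]=(n-1)^r-m^r$, and split the result into the $(n-1)^r$ piece and the $m^r$ piece (the paper's $R_2$ and $-R_1$ in its Appendix). Your intermediate values $A=(n-1)(n-1)!-\frac{(n-1)^nH_n}{n}$ and $B=\frac{(n-1)H_n(n-1)!}{2}-\frac{(n-1)^nH_n}{n}$ agree exactly with the paper's $R_2$ and $-R_1$. The treatment of the $m^r$ piece is essentially identical in substance: the paper evaluates $\sum_{m}\binom{n-1}{m}(-1)^m m^n$ via the identity $\sum_{k=0}^n\binom{n}{k}(-1)^k(x-k)^{n+1}=(x-\tfrac n2)(n+1)!$, you via the Stirling number $S(n,n-1)=\binom n2$; these are the same finite-difference computation, and both of you correctly subtract the boundary term at $m=n-1$. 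The one genuine divergence is the $(n-1)^r$ piece: the paper invokes $\sum_{k=1}^n\frac{(-1)^{k+1}}{k}\binom nk\left[1-(1-x)^k\right]=\sum_{k=1}^n\frac{x^k}{k}$ with $x=(n-1)/m$, then binomially expands $(m-n+1)^k m^{n-k}$ and kills all but the two top-degree terms by finite differences, whereas you use the integral representation $\sum_{r=1}^n\binom nr\frac{a^rb^{n-r}}{r}=\int_0^a\frac{(u+b)^n-b^n}{u}\,du$, push the $m$-sum under the integral sign, and close with $\int_0^1\frac{1-(1-t)^n}{t}\,dt=H_n$. Your route is slightly cleaner: it handles $m=0$ uniformly (the paper's substitution $x=(n-1)/m$ is formally undefined there, though the resulting polynomial identity remains valid by continuity), and it produces $H_n$ in a single stroke rather than through two separate identities. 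One cosmetic slip: $(n-1)n!\,t-(n-1)^n[1-(1-t)^n]$ is not a linear function of $t$, but the displayed formula is exactly right, and that is all the final integration needs.
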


After Lemma \ref{id2} and Lemma \ref{id3}, we can show Theorem \ref{maintheorem2} in this section. \\

\textbf{Proof of Theorem \ref{maintheorem2}:} 
\begin{proof}
By identity \eqref{formulaintTk}, Lemma \ref{id2} and Lemma \ref{id3}, they imply that
\begin{eqnarray}\label{Tkint}
& &  \int_0^1  \sum_{k=1}^{n-1} T_k  \log (t+k-1) dt  \\
 &= & \sum_{k=1}^{n-1} S_1(k) - \sum_{k=1}^{n-1} S_2(k) \nonumber\\
 &=&  \frac{1}{n} \sum_{k=2}^{n-2} {n-1 \choose k} (-1)^k (-k)^n \log k + \frac{\log (n-1)}{n} \left[ n! (n-1) -(n-1)^n  \right] +\nonumber\\
 & & \qquad -(n-1)!(n-1)+ \frac{n-1}{2}H_n(n-1)!. \nonumber
\end{eqnarray}

By identities \eqref{formulaofI} and \eqref{Tkint}, it follows that
\begin{eqnarray*}
I&=& \frac{1}{2} \log(2 \pi) + (n-1) \log(n-1) - n+1 - \frac{1}{(n-1)!} \int_0^1  \sum_{k=1}^{n-1} T_k  \log (t+k-1) dt \\
& =& \frac{1}{2} \log(2 \pi) -\frac{n-1}{2}H_n + \frac{1}{n!}\sum_{k=2}^{n-1} {n-1 \choose k} (-1)^{k+n+1} k^n \log k, 
\end{eqnarray*}
where $\displaystyle H_n= \sum_{i=1}^n \frac{1}{i}$. The proof is complete.
\end{proof}

Specially, one can easily calculate the integral when $n=2,3$ and $4$:
\[I(2)= -\frac{3}{4} + \frac{1}{2} \log (2 \pi),  \]
\[I(3)= \frac{1}{2} \log (2 \pi) + \frac{4}{3} \log 2 - \frac{11}{6}, \]
\[I(4)= \frac{1}{2} \log (2 \pi)- 2 \log 2 + \frac{27}{8} \log 3 -\frac{25}{8}. \]

\begin{appendix}
\renewcommand\thesection{\appendixname~\Alph{section}}
\section{}
For reader's convenience, the proof of Lemma  \ref{id3} is presented in the appendix.

\begin{lemma}[Lemma \ref{id3}]
\begin{eqnarray*}
 \sum_{k=1}^{n-1} S_2(k) = (n-1)!(n-1) - \frac{n-1}{2}H_n(n-1)!,
\end{eqnarray*}
where $\displaystyle H_n= \sum_{i=1}^n \frac{1}{i}$.
\end{lemma}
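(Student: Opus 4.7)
The strategy is to interchange all three summation orders, collapse the $k$-sum by telescoping, and then recognize the remaining sums through two classical identities: $\sum_{r=1}^n \frac{(-1)^{r-1}}{r}\binom{n}{r}=H_n$ and the finite-difference identity $\sum_{m=0}^{n-1}\binom{n-1}{m}(-1)^m m^j = (-1)^{n-1}(n-1)!\,S(j,n-1)$, where in particular $S(j,n-1)=0$ for $j\le n-2$ and $S(n,n-1)=\binom{n}{2}$.

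First I would pull the $r$-sum outside and swap $m$ with $k$, so that for each fixed $m\in[0,n-2]$ the inner index $k$ runs from $m+1$ to $n-1$. The crucial simplification is then the telescoping
\[
\sum_{k=m+1}^{n-1}\bigl[k^r-(k-1)^r\bigr] = (n-1)^r - m^r,
\]
which turns the problem into
\[
n\sum_{k=1}^{n-1} S_2(k) \;=\; T_1 - T_2, \qquad T_i = \sum_{r=1}^{n}\frac{1}{r}\binom{n}{r}\,X_i(r),
\]
with $X_1(r)=(n-1)^r\sum_{m=0}^{n-2}\binom{n-1}{m}(-1)^m(-m)^{n-r}$ and $X_2(r)=\sum_{m=0}^{n-2}\binom{n-1}{m}(-1)^m(-m)^{n-r}m^r$.

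For $T_2$, the key observation is the algebraic identity $(-m)^{n-r}m^r=(-1)^{n-r}m^n$, which decouples $r$ and $m$. The $r$-sum becomes $\sum_{r=1}^{n}\frac{(-1)^{n-r}}{r}\binom{n}{r}=-(-1)^n H_n$, and the $m$-sum is $(-1)^{n-1}\bigl[\binom{n}{2}(n-1)!-(n-1)^n\bigr]$ by the Stirling identity applied to $j=n$ (after peeling off the $m=n-1$ term). Multiplying gives $T_2 = H_n\bigl[\binom{n}{2}(n-1)!-(n-1)^n\bigr]$. For $T_1$, the inner $m$-sum of $m^{n-r}$ weighted by $\binom{n-1}{m}(-1)^m$ vanishes for $r\ge 2$ (because then $n-r\le n-2$), leaving only the boundary contribution $-(-1)^{n-1}(n-1)^{n-r}$; the exceptional case $r=1$ contributes $(-1)^{n-1}\bigl[(n-1)!-(n-1)^{n-1}\bigr]$. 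Summing over $r$ and using $\sum_{r=1}^{n}\frac{(-1)^r}{r}\binom{n}{r}=-H_n$ again produces $T_1 = n(n-1)(n-1)! - H_n(n-1)^n$.

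Finally, the $(n-1)^n$ terms cancel in $T_1-T_2$, yielding
\[
n\sum_{k=1}^{n-1}S_2(k) \;=\; n(n-1)!\,(n-1)-\binom{n}{2}H_n(n-1)!,
\]
and dividing by $n$ gives the stated formula. The main obstacle is handling the exceptional case $r=1$ in the evaluation of $T_1$: this is the unique index where the finite-difference identity hits the nonzero value $(-1)^{n-1}(n-1)!$ rather than zero, so the formula $\frac{(-1)^r}{r}\binom{n}{r}(n-1)^n$ that works for $r\ge 2$ fails at $r=1$ and instead contributes the $n(n-1)(n-1)!$ piece that survives all cancellation. Careful sign bookkeeping (the various $(-1)^{n-r}$, $(-1)^m$, $(-1)^{n-1}$ factors) is the only real technical demand; no new identity beyond the two standard ones cited above is needed.
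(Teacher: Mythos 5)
Your proof is correct, and its skeleton coincides with the paper's: interchanging the $k$- and $m$-sums and telescoping $\sum_{k=m+1}^{n-1}[k^r-(k-1)^r]=(n-1)^r-m^r$ is precisely how the paper arrives at its decomposition $\sum_k S_2(k)=R_1+R_2$ (your $-T_2/n$ and $T_1/n$, respectively), and your evaluation of $T_2$ is the paper's evaluation of $R_1$ in different clothing --- your Stirling-number identity $\sum_{m=0}^{n-1}\binom{n-1}{m}(-1)^m m^n=(-1)^{n-1}(n-1)!\binom{n}{2}$ is the same fact as the paper's $\sum_{k=0}^n\binom{n}{k}(-1)^k(x-k)^{n+1}=(x-\tfrac{n}{2})(n+1)!$ suitably specialized. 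Where you genuinely diverge is in the second piece. The paper evaluates $R_2$ by first transforming the inner $r$-sum with the auxiliary identity $\sum_{k=1}^n\frac{(-1)^{k+1}}{k}\binom{n}{k}\bigl[1-(1-x)^k\bigr]=\sum_{k=1}^n\frac{x^k}{k}$ at $x=(m-n+1)/m$, then re-expanding $(m-n+1)^k m^{n-k}$ binomially and only afterwards invoking the finite-difference identities. You instead swap the $r$- and $m$-sums in $T_1$ directly and use the vanishing of the $(n-1)$-st finite difference of $m^{n-r}$ for $r\ge 2$, so that only the boundary term $m=n-1$ and the exceptional index $r=1$ survive; this is shorter, needs one fewer identity, and makes transparent why the $(n-1)^n$ contributions cancel against those from $T_2$. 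Both routes land on $n\sum_k S_2(k)=n(n-1)(n-1)!-\binom{n}{2}H_n(n-1)!$, as required.
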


\begin{proof}
Note that 
\begin{eqnarray*}
 &&\sum_{k=1}^{n-1} S_2(k) \\
&=& \frac{1}{n} \sum_{k=1}^{n-1} \left[  \sum_{m=0}^{k-1} {n-1 \choose m} (-1)^m  \sum_{r=1}^n \frac{k^r- (k-1)^r}{r}{n \choose r} (-m)^{n-r} \right] \\
&=& -\frac{1}{n} \sum_{k=1}^{n-2} {n-1 \choose k} (-1)^k (-k)^n \sum_{r=1}^n {n \choose r} \frac{(-1)^r}{r}+ \\
 & & \quad +\frac{1}{n} \sum_{m=0}^{n-2} {n-1 \choose m} (-1)^m \sum_{r=1}^n \frac{(-m)^{n-r} (n-1)^r }{r} {n \choose r} .
\end{eqnarray*}
Let 
\begin{equation}\label{R1}
 \displaystyle R_1= -\frac{1}{n} \sum_{k=1}^{n-2} {n-1 \choose k} (-1)^k (-k)^n \sum_{r=1}^n {n \choose r} \frac{(-1)^r}{r}
 \end{equation}
 and 
\begin{equation}\label{R2}
\displaystyle R_2= \frac{1}{n} \sum_{m=0}^{n-2} {n-1 \choose m} (-1)^m \sum_{r=1}^n \frac{(-m)^{n-r} (n-1)^r }{r} {n \choose r} , 
\end{equation}
Then 
\begin{equation}\label{S2R1R2}
\displaystyle \sum_{k=1}^{n-1} S_2(k) = R_1+R_2.
\end{equation} 

By applying combinatorial identities $\displaystyle \sum_{k=0}^n {n \choose k} (-1)^k (x-k)^{n+1} = (x- \frac{n}{2}) (n+1)!$ and $\displaystyle  \sum_{k=1}^n \frac{(-1)^{k+1}}{k} {n \choose k}= H_n$, $R_1$ can be simplified to
\begin{eqnarray}\label{R1formula} 
R_1&=& \frac{1}{n} \sum_{k=1}^{n-2} {n-1 \choose k} (-1)^k (-k)^n \sum_{r=1}^n {n \choose r} \frac{(-1)^{r+1}}{r} \nonumber \\
&=& \frac{H_n}{n} \left[ (n-1)^n - \frac{n-1}{2} n!  \right]. 
\end{eqnarray}

To simplify $R_2$, we apply combinatorial identity $\displaystyle \sum_{k=1}^n \frac{(-1)^{k+1}}{k} {n \choose k} \left[ 1-(1-x)^k\right]= \sum_{k=1}^n \frac{x^k}{k}$ and it follows that
\begin{eqnarray*}
& &\sum_{r=1}^n \frac{(-m)^{n-r} (n-1)^r }{r} {n \choose r} \\
&=&- (-m)^n \sum_{r=1}^n  \frac{(-1)^{r+1} }{r} {n \choose r} (1- \frac{m-n+1}{m})^r\\
&=& (-m)^n \left[ \sum_{r=1}^n \frac{1}{r} (\frac{m-n+1}{m})^r- \sum_{r=1}^n \frac{(-1)^{r+1}}{r} {n \choose r}  \right]\\
&=&\sum_{r=1}^n \frac{1}{r} (m-n+1)^r m^{n-r} (-1)^n -  (-m)^n H_n .
\end{eqnarray*}
Recall the formula of $R_2$ in \eqref{R2}, it follows that
\begin{eqnarray}\label{RR2}
& & n R_2\\
 &=& \sum_{m=0}^{n-2} {n-1 \choose m} (-1)^{m+n} \sum_{k=1}^n \frac{1}{k} (m-n+1)^k m^{n-k} - H_n \sum_{m=0}^{n-2} {n-1 \choose m} (-1)^m (-m)^n. \nonumber
\end{eqnarray}
Note that, by combinatorial identity $\displaystyle \sum_{k=0}^n {n \choose k} (-1)^k (x-k)^{n+1} = (x- \frac{n}{2}) (n+1)!$, we have
\begin{equation}\label{nR11}
 \sum_{m=0}^{n-2} {n-1 \choose m} (-1)^m (-m)^n= (n-1)^n- \frac{n-1}{2} n!.
 \end{equation}
To simplify $\displaystyle \sum_{m=0}^{n-2} {n-1 \choose m} (-1)^{m+n} \sum_{k=1}^n \frac{1}{k} (m-n+1)^k m^{n-k}$, we note that
\begin{eqnarray}\label{nR22}
&&\sum_{m=0}^{n-2} {n-1 \choose m} (-1)^{m+n} \sum_{k=1}^n \frac{1}{k} (m-n+1)^k m^{n-k} \\
&=& \sum_{k=1}^n \frac{(-1)^n}{k} \left[ \sum_{m=0}^{n-2} {n-1 \choose m} (-1)^{m} \sum_{i=0}^k {k \choose i} m^{n-k+i}(n-1)^{k-i}(-1)^{k-i}    \right].\nonumber
\end{eqnarray}
Let $\displaystyle P(m)= \sum_{i=0}^k {k \choose i} m^{n-k+i}(n-1)^{k-i}(-1)^{k-i} $. We apply the combinatorial identity $\displaystyle \sum_{k=0}^n (-1)^k  {n \choose k}  \mathbf{P}(k)=0 $, for any polynomial $\mathbf{P}(k)$ with $\deg P(k) <n$, and it follows that
\begin{eqnarray}\label{nR23}
&&\sum_{m=0}^{n-2} {n-1 \choose m} (-1)^{m} \sum_{i=0}^k {k \choose i} m^{n-k+i}(n-1)^{k-i}(-1)^{k-i}  \nonumber \\
&=& \sum_{m=0}^{n-2} {n-1 \choose m} (-1)^{m} P(m) \nonumber\\
&=& \sum_{m=0}^{n-1} {n-1 \choose m} (-1)^{m} P(m)- (-1)^{n-1} P(n-1) \nonumber \\
&=& \sum_{m=0}^{n-1} {n-1 \choose m} (-1)^{m}  \left[ -k (n-1) m^{n-1}+ m^n \right].
\end{eqnarray}
By combinatorial identity $\displaystyle \sum_{k=0}^n (-1)^k  {n \choose k} (x+n-k)^n= n!$, we have
\[   -k(n-1)\sum_{m=0}^{n-1} {n-1 \choose m} (-1)^{m} m^{n-1} = k(n-1) (-1)^n  (n-1)!. \]
By combinatorial identity $\displaystyle  \sum_{k=0}^n {n \choose k} (-1)^k (x-k)^{n+1}= (x-\frac{n}{2}) (n+1)!$, we have 
\[ \sum_{m=0}^{n-1} {n-1 \choose m} (-1)^{m}m^n= (-1)^{n-1} \frac{n-1}{2}  n!.  \]
Then identity \eqref{nR22} becomes
\begin{eqnarray}\label{nR24}
&&\sum_{m=0}^{n-2} {n-1 \choose m} (-1)^{m+n} \sum_{k=1}^n \frac{1}{k} (m-n+1)^k m^{n-k}  \nonumber\\
&=& \sum_{k=1}^n \frac{(-1)^n}{k} \left[ k(n-1) (-1)^n  (n-1)!+ (-1)^{n-1} \frac{n-1}{2}  n!\right] \nonumber\\
&=& n!(n-1)- \frac{n-1}{2} n! H_n,
\end{eqnarray}
where $\displaystyle H_n= \sum_{i=1}^n \frac{1}{i}$.

Hence, by identities \eqref{nR24} and \eqref{nR11},  $n R_2$ in \eqref{RR2} can be simplified to 
\begin{equation}\label{nR2f}
n R_2 = n! (n-1) - (n-1)^n H_n.
\end{equation}
That is, 
\begin{equation}\label{R2formula}
R_2= (n-1)! (n-1) -\frac{H_n}{n} (n-1)^n. 
\end{equation}
Therefore, by identities \eqref{S2R1R2}, \eqref{R1formula} and \eqref{R2formula}, it follows that  
\begin{eqnarray*}
\sum_{k=1}^{n-1} S_2(k) & =& R_1+R_2 \\
&=&  \frac{H_n}{n} \left[ (n-1)^n - \frac{n-1}{2} n!  \right]+  (n-1)! (n-1) -\frac{H_n}{n} (n-1)^n\\
&=&  (n-1)!(n-1) - \frac{n-1}{2}H_n(n-1)!,
\end{eqnarray*}
where $\displaystyle H_n= \sum_{i=1}^n \frac{1}{i}$.
\end{proof}

\end{appendix}

\section*{Acknowledgements}
The authors want to express their gratitude to the department of mathematics at Brigham Young University for its help and support. We sincerely thank Professor Tiancheng Ouyang for his invitations.

\end{document}